\theoremstyle{plain}
\newtheorem{theorem}{Theorem}[section]
\newtheorem{lemma}{Lemma}[section]
\theoremstyle{definition}
\newtheorem{example}{Example}[section]
\newtheorem{remark}{Remark}[section]
\numberwithin{equation}{section}
\begin{document}

\title[]
{When is the sum of complemented subspaces complemented?}

\author{Ivan Feshchenko}

\address{Taras Shevchenko National University of Kyiv,
Faculty of Mechanics and Mathematics, Kyiv, Ukraine}

\email{ivanmath007@gmail.com}

\subjclass[2010]{Primary 46B99; Secondary 46B28, 46N30.}

\keywords{Sum of subspaces, complemented subspace, closed subspace, marginal subspace, projection}

\begin{abstract}
We provide a sufficient condition for the sum of a finite number of complemented subspaces of a Banach space to be complemented.
Under this condition a formula for a projection onto the sum is given.
We also show that the condition is sharp (in a certain sense).
As applications, we get
(1) sufficient conditions for the complementability of sums of marginal subspaces in $L^p$ and
sums of tensor powers of subspaces in a tensor power of a Banach space
and (2) quantitative results on stability of the complementability property of the sum of linearly independent subspaces.
\end{abstract}

\maketitle

\section{Introduction}

\subsection{Complemented subspaces in Banach spaces}
Let $X$ be a (complex or real) Banach space.
By a subspace of $X$ we will mean a linear subset of $X$.
Let $M$ be a subspace of $X$.
$M$ is said to be complemented in $X$ if there exists a subspace $N$ (a complement) such that $X$ is the topological direct sum of $M$ and $N$.
This means that the sum operator $S:M\times N\to X$ defined by $S(x,y)=x+y$, $x\in M, y\in N$ is an isomorphism (of normed linear spaces).
Here $M\times N$ is the linear space of all pairs $(x,y)$ with $x\in M, y\in N$ endowed with the norm $\|(x,y)\|=\|x\|+\|y\|$.
One can easily check that $M$ is complemented in $X$ if and only if there exists a continuous linear projection onto $M$, i.e.,
a continuous linear operator $P:X\to X$ such that $Px\in M$ for all $x\in X$ and $Px=x$ for $x\in M$.

Each complemented subspace is closed (this follows from the fact that $M=S(M\times\{0\})$).
Note that one can give the following (equivalent) definition of complementability:
a subspace $M$ is said to be complemented in $X$ if $M$ is closed and there exists a closed subspace $N$ such that $M\cap N=\{0\}$ and $M+N=X$
(the equivalence of the definition to the original follows from the fact that each complemented subspace is closed and
the Banach inverse mapping theorem).

If $X$ is a Hilbert space, then each closed subspace $M$ of $X$ is complemented in $X$
(one can consider the orthogonal decomposition $X=M\oplus M^\bot$ or, equivalently, the orthogonal projection onto $M$).
Of course, this is true if $X$ is isomorphic to a Hilbert space.
But if $X$ is not isomorphic to a Hilbert space, then by the Lindenstrauss-Tzafriri theorem $X$ contains a closed subspace
which is not complemented in $X$.

For further information on complemented and uncomplemented subspaces in Banach spaces and, in particular,
various examples of uncomplemented closed subspaces see, e.g., \cite{Kadets73}, \cite{Moslehian06} and the references therein.

\subsection{}
Let $X$ be a Banach space and $X_1,...,X_n$ be complemented subspaces of $X$.
Define the sum of $X_1,...,X_n$ in the natural way, namely,
$$X_1+...+X_n:=\{x_1+...+x_n\mid x_1\in X_1,...,x_n\in X_n\}.$$
The natural question arises:

\noindent
\textbf{Question 1:}
\begin{center}
\textit{Is $X_1+...+X_n$ complemented in $X$?}
\end{center}

Note that Question 1 makes sense --- the sum of two complemented subspaces may be uncomplemented and even nonclosed.
A simple example: let $X$ be a Hilbert space, then a subspace is complemented if and only if it is closed,
and there are well-known simple examples of two closed subspaces with nonclosed sum.
Note that even if the sum of two complemented subspaces is closed, it can be uncomplemented.
An example: let $Y$ be a closed uncomplemented subspace of a Banach space $Z$;
take $X=Y\times Z$,
$$X_1=\{(y,0)\mid y\in Y\},\qquad X_2=\{(y,y)\mid y\in Y\}.$$
It is easily seen that $X_1$ and $X_2$ are complemented in $X$ but the sum $X_1+X_2=Y\times Y$ is not.

\subsection{}
If Question 1 has positive answer, then the next natural question arises:

\noindent
\textbf{Question 2:}
\begin{center}
\textit{Suppose that we know some (continuous linear) projections $P_1,...,P_n$ onto $X_1,...,X_n$, respectively.
Is there a formula for a projection onto $X_1+...+X_n$ (in terms of $P_1,...,P_n$) (of course, under certain conditions)?}
\end{center}

\subsection{}
Since each complemented subspace is closed, Question 1 is closely related to the following

\noindent
\textbf{Question 3:}
\begin{center}
\textit{Is $X_1+...+X_n$ closed in $X$?}
\end{center}

It is worth mentioning that if $X$ is a Hilbert space, then Question 1 coincides with Question 3.

Systems of subspaces $X_1,...,X_n$ for which Question 3 is very important arise in various branches of mathematics, for example, in
\begin{enumerate}
\item
theoretical tomography and theory of ridge functions (plane waves).
Here the problem on the closedness of the sum of spaces of functions, which are constant on certain sets, naturally arises.
See, e.g., \cite{SheppKruskal78}, \cite[Introduction, Chapter 7 and the references therein]{Pinkus15};
\item
theory of wavelets and multiresolution analysis.
Here the problem on the closedness of the sum of shift-invariant subspaces of $L^2(\mathbb{R}^d)$ is studied.
See, e.g., \cite{Kim06} and references therein;
\item
statistics.
See, e.g., \cite{Bickel91}, where
the closedness of the sum of two marginal subspaces is important for
constructing an efficient estimation of linear functionals of a probability measure with known marginal distributions;
\item
approximation algorithms in Hilbert and Banach spaces and, in particular, methods of alternating projections.
See, e.g., \cite[Chapter 9 and the bibliography therein]{Pinkus15}, \cite[Theorem 5.19]{BauschkeBorwein96}, \cite[Theorem 4.1]{Badea12},
\cite[Section 3]{Pustylnik12};
\item
a problem of finding an element of a Hilbert space with prescribed best approximations from a finite number of subspaces.
This problem is a common problem in applied mathematics, it arises in harmonic analysis, optics, and signal theory.
See, e.g., \cite{Combettes10} and references therein;
\item
theory of Banach algebras.
See, e.g., \cite{Rudin75, Dixon00, Dudziak00};
\item
theory of operator algebras.
See, e.g., \cite{Hartz12}, where
the closedness of finite sums of full Fock spaces over subspaces of $\mathbb{C}^d$ plays a crucial role
for construction of a topological isomorphism between universal operator algebras;
\item
quadratic programming.
See, e.g., \cite{Schochetman01};
\item
theory of $\mu$-pseudo almost periodic functions (or sequences) and $\mu$-pseudo almost automorphic functions (or sequences).
See, e.g., \cite{Blot16};
\end{enumerate}
and others.

\subsection{}
Another property of systems of subspaces which will be of interest to us is the linear independence of the subspaces.
A system of subspaces $X_1,...,X_n$ is said to be linearly independent if an equality $x_1+...+x_n=0$, where $x_1\in X_1,...,x_n\in X_n$,
implies that $x_1=...=x_n=0$.
Why we are interested in the linear independence property?
The thing is that the properties of linear independence of a system of subspaces and closedness of their sum are closely related
to the inverse best approximation property of a system of subspaces of a Hilbert space and its natural generalization,
the interpolation property of a system of operators.
Let us explain this relation.
In \cite{Combettes10} the authors study systems of closed subspaces $H_1,...,H_n$ of a Hilbert space $H$ which have the following property:
for arbitrary elements $h_1\in H_1,...,h_n\in H_n$ there exists an element $h\in H$ such that $P_{H_i}h$, the orthogonal projection of $h$ onto $H_i$,
is equal to $h_i$ for every $i=1,...,n$.
The property of a system of subspaces is called the inverse best approximation property (IBAP).
The property has the following natural generalization.
Let $V,W_1,...,W_n$ be Banach spaces and $A_i:V\to W_i$, $i=1,...,n$ be continuous linear operators.
We will say that the system of operators $A_1,...,A_n$ possesses the interpolation property (IP) if
for arbitrary elements $w_1\in W_1,...,w_n\in W_n$ there exists an element $v\in V$ such that $A_i v=w_i$ for $i=1,...,n$.
Note that if $V=H$, $W_i=H_i$ and $A_i h=P_{H_i}h$, $h\in H$ for $i=1,...,n$, then
the (IP) for $A_1,...,A_n$ coincides with the (IBAP) for $H_1,...,H_n$.
Using arguments similar to that in \cite[Subsection 4.2]{Feshchenko12}, one can show that
a system of operators $A_1,...,A_n$ possesses the (IP) if and only if the following two conditions hold:

(1) the range of $A_i$ is equal to $W_i$ for $i=1,...,n$;

(2) the subspaces $(\ker(A_1))^\bot,...,(\ker(A_n))^\bot$ are linearly independent and their sum is closed in $V^*$.
Here for a subset $M\subset V$ we denote by $M^\bot$ the annihilator of $M$, i.e., the set of all $\varphi\in V^*$ such that $\varphi|_M=0$.

In particular, a system of closed subspaces $H_1,...,H_n$ of a Hilbert space $H$ possesses the (IBAP) if and only if
$H_1,...,H_n$ are linearly independent and their sum is closed in $H$.

\subsection{}
The paper is organized as follows.
In Section \ref{s:B} we study Questions 1 and 2 in the (general) Banach space setting.
In Subsection \ref{ss:B trivial observations} we make a few simple observations on the questions.
In Subsection \ref{ss:B known results} we present known results.
Our results are presented in Subsections \ref{ss:B result 1}, \ref{ss:B result 2}, and \ref{ss:B necessity r(E)<1}.
We provide a sufficient condition for the sum of complemented subspaces of a Banach space to be complemented.
Under this condition a formula for a projection onto the sum is given (see Theorems~\ref{Th:Banach} and~\ref{Th:rate Banach}).
We also show that the condition is sharp (in a certain sense) (see Theorem~\ref{Th:Banach r(E)=1}).
Proofs of the results are given in Subsections \ref{ss:B proof 1,2} and \ref{ss:B proof 3}.

As applications of Theorem~\ref{Th:Banach} we get

(1) a sufficient condition for the complementability of sums of marginal subspaces in $L^p$
(see Section~\ref{s:marginal}, the main result is Theorem~\ref{Th:Marginal});

(2) sufficient conditions for the complementability of sums of tensor powers of subspaces in a tensor power of a Banach space
(see Section~\ref{s:tensor}, main results are Theorems~\ref{Th:Tensor} and~\ref{Th:Tensor E});

(3) quantitative results on stability of the complementability property of the sum of linearly independent subspaces
(see Section~\ref{s:stability}, main results are presented in Subsection~\ref{ss:stability complete proof}).

\subsection{Notation}
Throughout the paper, $X$ is a real or complex Banach space with norm $\|\cdot\|$.
The identity operator on $X$ is denoted by $I$ (throughout the paper it is clear which Banach space is being considered).
All operators in the paper are continuous linear operators.
In particular, by a projection we always mean a continuous linear projection.
The kernel and range of an operator $T$ will be denoted by $\ker(T)$ and $Ran(T)$, respectively.
All vectors are vector-columns; the letter "t" means transpose.

\section{On sums of complemented subspaces}\label{s:B}

Let $X$ be a Banach space, $X_1,...,X_n$ be complemented subspaces of $X$ and $P_1,...,P_n$ be projections onto $X_1,...,X_n$, respectively.

\subsection{Simple observations}\label{ss:B trivial observations}

We begin with a few simple observations on Questions 1 and 2.
These observations were used by many authors.

(1)
If $P_i|_{X_j}=0$ for all $i\neq j$, $i,j\in\{1,...,n\}$, then $X_1,...,X_n$ are linearly independent, their sum is complemented in $X$ and
$$P=P_1+...+P_n$$
is a projection onto $X_1+...+X_n$.

\begin{remark}\label{r:converse}
The ``converse'' is also true.
More precisely, let $V_1,...,V_n$ be closed subspaces of $X$.
If $V_1,...,V_n$ are linearly independent and their sum is complemented in $X$, then there \emph{exist}
projections $Q_1,...,Q_n$ onto $V_1,...,V_n$, respectively, such that $Q_i|_{V_j}=0$ for all $i\neq j$.
Let us prove this.
Denote by $V$ a complement of $V_1+...+V_n$ in $X$.
Then $V$ is closed, the subspaces $V_1,...,V_n,V$ are linearly independent and their sum is equal to $X$.
Let $V_1\times...\times V_n\times V$ be the linear space of all vector-columns $(v_1,...,v_n,v)^t$ with $v_1\in V_1,...,v_n\in V_n,v\in V$
endowed with the norm $\|(v_1,...,v_n,v)^t\|=\|v_1\|+...+\|v_n\|+\|v\|$.
Then, obviously, $V_1\times...\times V_n\times V$ is a Banach space.
Define the sum operator $S:V_1\times...\times V_n\times V\to X$ by
$$S(v_1,...,v_n,v)^t=v_1+...+v_n+v,\quad v_1\in V_1,...,v_n\in V_n,v\in V.$$
Then $\ker(S)=\{0\}$ and $Ran(S)=X$.
It follows that $S^{-1}$ is bounded.
Denote by $\pi_i$ the natural projection $\pi_i:V_1\times...\times V_n\times V\to V_i$ and set $Q_i=\pi_i S^{-1}$, that is,
$Q_i x=v_i$ if $x=v_1+...+v_n+v$, where $v_1\in V_1,...,v_n\in V_n,v\in V$.
It is clear that $Q_i$ is a bounded projection onto $V_i$ and $Q_i|_{V_j}=0$ for all $j\neq i$.
\end{remark}

(2)
Let $n=2$.
If $P_2|_{X_1}=0$, that is, $P_2 P_1=0$, then the subspaces $X_1,X_2$ are linearly independent (i.e., $X_1\cap X_2=\{0\}$),
their sum is complemented in $X$ and
$$P=P_1+P_2-P_1 P_2$$
is a projection onto $X_1+X_2$.
Note that $P_1+P_2-P_1 P_2=I-(I-P_1)(I-P_2)$.
Now an induction argument shows that if $P_i|_{X_j}=0$ for all $i>j$, $i,j\in\{1,...,n\}$,
then $X_1,...,X_n$ are linearly independent, their sum is complemented in $X$ and
$$P=I-(I-P_1)(I-P_2)...(I-P_n)$$
is a projection onto $X_1+...+X_n$.

(3)
(see, e.g., \cite[Lemma 2.6]{Svensson87})
Let $n=2$.
If $X_2$ is finite dimensional, then $X_1+X_2$ is complemented in $X$.

Indeed, we can assume that $X_2\cap X_1=\{0\}$.
Using the Hahn-Banach theorem one can easily construct a projection $P_2$ onto $X_2$ such that $P_2|_{X_1}=0$.
Now observation (2) shows that $X_1+X_2$ is complemented in $X$.

\subsection{Known results}\label{ss:B known results}

Questions 1 and 2 seem to be very basic in the theory of complemented subspaces,
but, to our knowledge, there are only a few known results (in the general Banach space setting).
Let us present them.

For $n=2$ each of the following conditions is sufficient for $X_1+X_2$ to be complemented in $X$:
\begin{enumerate}
\item
(Alan LaVergne, 1979, \cite[Proposition]{LaVergne79})
$P_2 P_1$ is strictly singular.
In fact, the proof given in \cite{LaVergne79} works for the case when $I-P_2 P_1$ is Fredholm of index zero;
\item
(Lars Svensson, 1987, \cite[Lemma 2.5]{Svensson87})
$\ker(I-P_2 P_1)=\ker(I-P_1 P_2)=X_1\cap X_2$ is complemented in $X$ and
$Ran(I-P_2 P_1)$, $Ran(I-P_1 P_2)$ are also complemented in $X$;
\item
(\cite[Theorem 2.8]{Svensson87})
$I-P_2 P_1$ and $I-P_1 P_2$ are Fredholm of index zero.
In fact, the proof given in \cite{Svensson87} works for the case when $I-P_2 P_1$ and $I-P_1 P_2$ are Fredholm;
\item
(Manuel Gonzalez, 1994, \cite[Lemma 1]{Gonzalez94})
$P_2 P_1$ is inessential.
The proof given in \cite{Gonzalez94} repeats that of \cite{LaVergne79}
(note that if an operator $A:X\to X$ is inessential, then $I-A$ is Fredholm of index zero).
\item
(S\"{u}leyman \"{O}nal and Murat Yurdakul, 2013, \cite{Yurdakul})
the restriction of the operator $I-P_2 P_1$ to its invariant subspace $X_2$ is Fredholm.
One can easily check that the condition is equivalent to the following: the operator $I-P_2 P_1$ is Fredholm.

We should note that \cite[Proofs of Lemma 1 and Proposition 2]{Yurdakul} show more.
Namely, if $\ker(I-P_2 P_1)$ is finite dimensional and $(I-P_2 P_1)(X_2)$ is complemented in $X_2$,
then $X_1+X_2$ is complemented in $X$.
\end{enumerate}

Concerning Question 2, a few formulas for a projection onto $X_1+X_2$ (under certain conditions) can be found in \cite{Svensson87}.
For example, if $\ker(I-P_2 P_1)=\ker(I-P_1 P_2)=\{0\}$ and $Ran(I-P_2 P_1)$, $Ran(I-P_1 P_2)$ are complemented in $X$, then
$$P=P_1 A_{21}(I-P_2)+P_2 A_{12}(I-P_1)$$
is a projection onto $X_1+X_2$, here $A_{12}$ and $A_{21}$ are left-inverses for $I-P_1 P_2$ and $I-P_2 P_1$, respectively.
One more formula can be obtained by \cite[Proofs of Lemma 1 and Proposition 2]{Yurdakul}.

For arbitrary $n$ each of the following conditions is sufficient for $X_1+...+X_n$ to be complemented in $X$:
\begin{enumerate}
\item
(\cite[Corollary]{LaVergne79})
$X_1,...,X_n$ are pairwise totally incomparable.
We should note that using LaVergne's proof of \cite[Proposition]{LaVergne79} one can get a stronger result.
In fact, using the proof one can easily show that if $P_2 P_1$ is strictly singular,
then there exists a projection $P$ onto $X_1+X_2$ such that $P$ equals $P_1+P_2-P_1 P_2$ modulo strictly singular operators.
Now an induction argument shows that if $P_i P_j$ is strictly singular for each pair $i>j$, $i,j\in\{1,...,n\}$,
then $X_1+...+X_n$ is complemented in $X$ and
there exists a projection $P$ onto $X_1+...+X_n$ such that $P$ equals
$$I-(I-P_1)...(I-P_n)$$
modulo strictly singular operators.
\item
(\cite[Corollary 2.9]{Svensson87})
$P_i P_j$ is compact for every pair $i\neq j$, $i,j\in\{1,...,n\}$.
Moreover, under this condition there exists a projection $P$ onto $X_1+...+X_n$ such that $P$ equals
$$P_1+...+P_n$$
modulo compact operators.
\end{enumerate}

\subsection{Our result}\label{ss:B result 1}

In this subsection we provide a sufficient condition for $X_1+...+X_n$ to be complemented in $X$.
Under the condition a formula for a projection onto the sum is given.
The result can be regarded as a strengthening of observation (1) in Subsection \ref{ss:B trivial observations}.

Suppose that nonnegative numbers $\varepsilon_{ij}$, $i\neq j$, $i,j\in\{1,...,n\}$ are such that
\begin{equation}\label{eq:varepsilon_ij}
\|P_i x\|\leqslant\varepsilon_{ij}\|x\|,\quad x\in X_j
\end{equation}
for every $i\neq j$, $i,j\in\{1,...,n\}$.

\begin{remark}
It is clear that \eqref{eq:varepsilon_ij} is equivalent to the inequality $\|P_i|_{X_j}\|\leqslant\varepsilon_{ij}$.
The reader may wonder why we don't set $\varepsilon_{ij}:=\|P_i|_{X_j}\|$.
Answer: we believe that \eqref{eq:varepsilon_ij} is more convenient for applications.
Indeed, finding the exact value of $\|P_i|_{X_j}\|$ is usually much more difficult than obtaining an inequality of the form \eqref{eq:varepsilon_ij}.
\end{remark}

Define the $n\times n$ matrix $E=(e_{ij})$ by
$$e_{ij}=
\begin{cases}
0, &\text{if $i=j$;}\\
\varepsilon_{ij}, &\text{if $i\neq j$.}
\end{cases}
$$
Denote by $r(E)$ the spectral radius of $E$.
Set $A:=P_1+...+P_n$.

Now we are ready to formulate our first result.

\begin{theorem}\label{Th:Banach}
If $r(E)<1$, then the subspaces $X_1,...,X_n$ are linearly independent, their sum is complemented in $X$ and the subspace $\ker(P_1)\cap...\cap\ker(P_n)$
is a complement of $X_1+...+X_n$ in $X$.
Moreover, the sequence of operators
$$I-(I-A)^N$$
converges uniformly to the projection $P$ onto $X_1+...+X_n$ along $\ker(P_1)\cap...\cap\ker(P_n)$ as $N\to\infty$.
\end{theorem}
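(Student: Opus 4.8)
The plan is to realise $A=P_1+\dots+P_n$ as a product of two operators passing through the external direct sum $\mathcal{X}:=X_1\oplus\dots\oplus X_n$ (normed, say, by the maximum of the coordinate norms), and thereby transfer the whole problem to the invertibility of a single operator on $\mathcal{X}$. Define $S\colon X\to\mathcal{X}$ by $Sx=(P_1x,\dots,P_nx)$ and $T\colon\mathcal{X}\to X$ by $T(x_1,\dots,x_n)=x_1+\dots+x_n$. Then $Ran(T)=X_1+\dots+X_n$ and $\ker(S)=\ker(P_1)\cap\dots\cap\ker(P_n)$, while a direct computation (using $P_i|_{X_i}=I$) gives the two factorisations
\[
A=TS,\qquad B:=ST=I_{\mathcal{X}}+C,
\]
where $C$ is the ``off-diagonal'' operator whose blocks are $C_{ij}=P_i|_{X_j}$ for $i\neq j$ and $C_{ii}=0$. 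Everything will follow once $B$ is shown to be invertible.

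The heart of the argument --- and the only place where the hypothesis $r(E)<1$ is used --- is to make $\|C\|<1$ in a suitable equivalent norm on $\mathcal{X}$. Since $E\geqslant0$ entrywise and $r(E)<1$, the Neumann series gives $(I-E)^{-1}=\sum_{k\geqslant0}E^k\geqslant I$, so the vector $u:=(I-E)^{-1}\mathbf{1}$ (with $\mathbf{1}=(1,\dots,1)^{t}$) is strictly positive and satisfies $Eu=u-\mathbf{1}<u$. Renorming $\mathcal{X}$ by $\|(x_1,\dots,x_n)\|_u:=\max_i\|x_i\|/u_i$ and invoking $\|P_ix_j\|\leqslant\varepsilon_{ij}\|x_j\|=e_{ij}\|x_j\|$ from \eqref{eq:varepsilon_ij}, I obtain for each $i$ the bound $\|(C\mathbf{x})_i\|\leqslant\sum_{j\neq i}e_{ij}u_j\|\mathbf{x}\|_u=(Eu)_i\|\mathbf{x}\|_u=(u_i-1)\|\mathbf{x}\|_u$, hence $\|C\|_u\leqslant 1-(\max_i u_i)^{-1}=:\rho<1$. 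Consequently $B=I_{\mathcal{X}}+C$ is invertible with $B^{-1}=\sum_{k\geqslant0}(-C)^k$, and $(-C)^k=(I-B)^k\to0$ in operator norm. I expect the main obstacle to be exactly this renorming step: one must package the Perron-type vector $u$ cleanly for a general (possibly reducible) nonnegative $E$; once $r(E)<1$ has been converted into a genuine contraction, the rest is essentially mechanical.

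With $B$ invertible I would set $P:=TB^{-1}S=T(ST)^{-1}S$. The standard $TS$/$ST$ algebra then shows that $P$ is the desired projection: $P^2=T(ST)^{-1}(ST)(ST)^{-1}S=P$; the identity $T\mathbf{x}=P(T\mathbf{x})$ gives $Ran(P)=Ran(T)=X_1+\dots+X_n$; and $Sx=SPx$ gives $\ker(P)=\ker(S)=\ker(P_1)\cap\dots\cap\ker(P_n)$. In particular, being the range of a bounded projection, $X_1+\dots+X_n$ is closed and is complemented by $\ker(P_1)\cap\dots\cap\ker(P_n)$. For the convergence statement I would use the polynomial identity $1-(1-z)^N=z\,h_N(z)$ with $h_N(z)=\sum_{k=1}^{N}\binom{N}{k}(-1)^{k-1}z^{k-1}$, together with $A=TS$, $B=ST$, and $T(ST)^{k-1}S=(TS)^k$, which yield
\[
I-(I-A)^N=T\,h_N(B)\,S=T\,B^{-1}\bigl(I-(I-B)^N\bigr)S=T\,B^{-1}\bigl(I-(-C)^N\bigr)S.
\]
Since $(-C)^N\to0$ in operator norm, the right-hand side converges in operator norm to $TB^{-1}S=P$, which is precisely the claimed uniform convergence of $I-(I-A)^N$ to the projection onto $X_1+\dots+X_n$ along $\ker(P_1)\cap\dots\cap\ker(P_n)$.
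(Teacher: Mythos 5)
Your proposal is correct and follows essentially the same route as the paper: both factor $A$ through the external direct sum equipped with a weighted maximum norm, prove invertibility of the composite operator (your $B=ST$, the paper's $G=JS$) by a Neumann-series estimate coming from $r(E)<1$, set $P=T(ST)^{-1}S$, and deduce the uniform convergence of $I-(I-A)^N$ from the intertwining algebra $A^k=T(ST)^{k-1}S$. The only differences are cosmetic: you build the weight vector explicitly as $u=(I-E)^{-1}\mathbf{1}$ (which handles reducible $E$ uniformly, where the paper instead assumes $Ew\leqslant\alpha w$ and justifies existence of such $w$ in a separate remark via Perron--Frobenius theory), and you express the convergence step through the polynomial identity $1-(1-z)^N=z\,h_N(z)$ rather than the paper's telescoping of partial Neumann sums.
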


\begin{remark}
Theorem~\ref{Th:Banach} provides a sufficient condition for $n$ subspaces to be linearly independent and their sum to be complemented.
Even for $n=2$ this condition is not necessary.
To see this, consider the following simple example.
Let $X=\mathbb{R}^2$ with the Euclidean norm.
Set $v_1=(1,0)^t$, $v_2=(0,1)^t$ and let $X_i$ be the subspace spanned by $v_i$, $i=1,2$.
For two real numbers $a,b$ define projections $P_1,P_2$ onto $X_1,X_2$ by
$$
P_1=\begin{pmatrix}
1 & a\\
0 & 0
\end{pmatrix},
P_2=\begin{pmatrix}
0 & 0\\
b & 1
\end{pmatrix}.
$$
Then $P_1 v_2=av_1$ and therefore $\|P_1|_{X_2}\|=|a|$.
Also, $P_2 v_1=bv_2$ and therefore $\|P_2|_{X_1}\|=|b|$.
So for the optimal choice $\varepsilon_{12}=|a|$ and $\varepsilon_{21}=|b|$ we have
$$
E=\begin{pmatrix}
0  & |a|\\
|b|& 0
\end{pmatrix}
$$
and $r(E)=\sqrt{|ab|}$ can be arbitrarily large.
However, the subspaces $X_1,X_2$ are linearly independent and their sum $X_1+X_2=X$ is complemented in $X$.
\end{remark}

\begin{remark}
For $n=2$ the inequality $r(E)<1$ is equivalent to
$$\varepsilon_{12}\varepsilon_{21}<1.$$
For $n=3$ the inequality $r(E)<1$ is equivalent to
$$\varepsilon_{12}\varepsilon_{21}+\varepsilon_{23}\varepsilon_{32}+\varepsilon_{31}\varepsilon_{13}+
\varepsilon_{12}\varepsilon_{23}\varepsilon_{31}+\varepsilon_{21}\varepsilon_{32}\varepsilon_{13}<1.$$
For arbitrary $n\geqslant 2$, $r(E)<1$ if and only if each principal minor of the matrix $I-E$ is positive.
(Recall that a principal minor is the determinant of a principal submatrix;
a principal submatrix is a square submatrix obtained by removing certain rows and columns with the same index sets.)
This fact is an easy consequence of the theory of nonnegative matrices (see, e.g., \cite[Chapter 8]{HornJohnson13}).
\end{remark}

\subsection{A rate of convergence}\label{ss:B result 2}

For practical applications it is important to know how fast does the sequence $I-(I-A)^N$ converge to $P$.
Our next result shows that the rate of convergence can be estimated from above by $C\alpha^N$, where $\alpha\in[0,1)$.
To formulate the result we need the following notation: for two vectors $u,v\in\mathbb{R}^n$ we will write $u\leqslant v$ if $u\leqslant v$ coordinatewise.

\begin{theorem}\label{Th:rate Banach}
The following statements on the rate of convergence of $I-(I-A)^N$ to $P$ are true.
\begin{enumerate}
\item
Suppose a vector $w=(w_1,...,w_n)^t$ with positive coordinates and a number $\alpha\in[0,1)$ satisfy $Ew\leqslant\alpha w$.
Then
$$\|I-(I-A)^N-P\|\leqslant(w_1+...+w_n)\max\{(1/w_1)\|P_1\|,...,(1/w_n)\|P_n\|\}\frac{\alpha^N}{1-\alpha}$$
for each $N\geqslant 1$.
\item
Suppose a vector $w=(w_1,...,w_n)^t$ with positive coordinates and a number $\alpha\in[0,1)$ satisfy $E^t w\leqslant \alpha w$.
Then
$$\|I-(I-A)^N-P\|\leqslant(w_1\|P_1\|+...+w_n\|P_n\|)\max\{(1/w_1),...,(1/w_n)\}\frac{\alpha^N}{1-\alpha}$$
for each $N\geqslant 1$.
\end{enumerate}
\end{theorem}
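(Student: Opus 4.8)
The plan is to reduce the operator-norm estimate to a geometric series governed by the nonnegative matrix $E$, and then to feed in the test vector $w$. Write $B:=I-A$. Theorem~\ref{Th:Banach} tells us that $I-(I-A)^N=I-B^N$ converges in operator norm to $P$, equivalently $B^N\to I-P$. Starting from the exact finite telescoping identity
$$\sum_{m=N}^{M} A B^m=\sum_{m=N}^{M}(B^m-B^{m+1})=B^N-B^{M+1},$$
and letting $M\to\infty$ (so that $B^{M+1}\to I-P$ by Theorem~\ref{Th:Banach}), I obtain, with convergence in operator norm automatic, the identity $\sum_{m=N}^{\infty}A B^m=B^N-(I-P)=-(I-(I-A)^N-P)$. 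Hence
$$\|I-(I-A)^N-P\|=\Big\|\sum_{m=N}^{\infty}A B^m\Big\|\leqslant\sum_{m=N}^{\infty}\|A B^m\|,$$
and it remains to estimate $\|A B^m\|$.

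The key step is a componentwise contraction controlled by $E$. For $y\in X$ set $v(y):=(\|P_1 y\|,...,\|P_n y\|)^t\in\mathbb{R}^n$. Since $P_i^2=P_i$, a direct computation gives $P_i B y=-\sum_{j\neq i}P_i P_j y$; because $P_j y\in X_j$, the defining inequality \eqref{eq:varepsilon_ij} yields $\|P_i B y\|\leqslant\sum_{j\neq i}\varepsilon_{ij}\|P_j y\|=(E v(y))_i$, that is, $v(By)\leqslant E v(y)$ coordinatewise. Iterating and using that $E$ has nonnegative entries gives $v(B^m y)\leqslant E^m v(y)$. Consequently, for $\|x\|\leqslant 1$,
$$\|A B^m x\|=\Big\|\sum_{i=1}^{n}P_i B^m x\Big\|\leqslant\sum_{i=1}^{n}\|P_i B^m x\|=\sum_{i=1}^{n}(v(B^m x))_i\leqslant\sum_{i=1}^{n}(E^m v(x))_i,$$
so $\|A B^m\|\leqslant\sup_{\|x\|\leqslant 1}\sum_{i=1}^{n}(E^m v(x))_i$, where $v(x)_j=\|P_j x\|\leqslant\|P_j\|\,\|x\|$.

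Finally I feed in the vector $w$, treating the two parts by the two dual manipulations. For part (1), from $Ew\leqslant\alpha w$ and $E\geqslant 0$ I get $E^m w\leqslant\alpha^m w$; bounding $v(x)\leqslant\|x\|\max_j(\|P_j\|/w_j)\,w$ coordinatewise then gives $\sum_i(E^m v(x))_i\leqslant\|x\|\max_j(\|P_j\|/w_j)\,\alpha^m(w_1+\cdots+w_n)$. For part (2), I pass to the transpose: $E^t w\leqslant\alpha w$ means $w^t E\leqslant\alpha w^t$, hence $w^t E^m\leqslant\alpha^m w^t$; combining this with the coordinatewise row-vector inequality $\mathbf{1}^t\leqslant\max_j(1/w_j)\,w^t$, where $\mathbf{1}=(1,...,1)^t$, gives $\sum_i(E^m v(x))_i=\mathbf{1}^t E^m v(x)\leqslant\max_j(1/w_j)\,\alpha^m\,w^t v(x)$ together with $w^t v(x)\leqslant\|x\|(w_1\|P_1\|+\cdots+w_n\|P_n\|)$. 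In either case $\|A B^m\|\leqslant C\alpha^m$ with $C$ equal to the bracketed constant in the corresponding assertion, and summing $\sum_{m=N}^{\infty}C\alpha^m=C\alpha^N/(1-\alpha)$ yields precisely the two stated bounds.

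The routine parts are the telescoping and the geometric summation; the step I expect to carry the weight is the componentwise recursion $v(By)\leqslant E v(y)$, since this is exactly where hypothesis \eqref{eq:varepsilon_ij} gets converted into the matrix $E$ and what turns everything that follows into a purely linear-algebraic manipulation of the nonnegative matrix $E$ against the test vector $w$. The only mild care needed is to justify operator-norm (rather than merely pointwise) convergence of $\sum_m A B^m$, which, as noted above, is immediate from Theorem~\ref{Th:Banach}.
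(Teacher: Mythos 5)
Your proof is correct, but it takes a genuinely different route from the paper's. The paper proves both parts by working in the weighted product space $X_1\times\cdots\times X_n$ (with the weighted $\infty$-norm for part (1) and the weighted $1$-norm for part (2)): it introduces the operators $S$, $J$, $G=JS$, shows $\|I-G\|\leqslant\alpha$, represents $P=SG^{-1}J$ via the Neumann series, and bounds the tail $\|S(\sum_{k\geqslant N}(I-G)^k)J\|\leqslant\|S\|\,\|J\|\,\alpha^N/(1-\alpha)$; the whole rate estimate is thus entangled with the construction of $P$ in the proof of Theorem~\ref{Th:Banach}. You instead stay entirely in $X$: the telescoping identity $AB^m=B^m-B^{m+1}$ (note $A$ and $B=I-A$ commute, so this matches the paper's tail term by term) reduces everything to bounding $\|AB^m\|$, and your componentwise recursion $v(By)\leqslant Ev(y)$, hence $v(B^m y)\leqslant E^m v(y)$, converts hypothesis \eqref{eq:varepsilon_ij} directly into linear algebra for the nonnegative matrix $E$ against the test vector $w$. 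This buys several things: the rate estimate is decoupled from the product-space machinery and uses Theorem~\ref{Th:Banach} only as a black box; both parts are handled by one argument with two dual manipulations of $w$, rather than by rerunning the construction with a different norm; and your bound via $E^m$ is in principle sharper than the paper's via $\|E\|^m$, though under the hypothesis both collapse to $\alpha^m$. What the paper's route buys in exchange is that the same computation simultaneously yields complementedness, the identification of the complement, and the projection formula, none of which your argument reproves. One small point you should make explicit: to invoke Theorem~\ref{Th:Banach} under the hypothesis of part (2) you need $r(E)<1$, which follows since $E^t w\leqslant\alpha w$ with $w>0$ gives $r(E)=r(E^t)\leqslant\alpha<1$ (for part (1) the analogous implication from $Ew\leqslant\alpha w$ is the one recorded in the paper's remark).
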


\begin{remark}
Since $E$ is a nonnegative matrix,
the existence of a vector $w\in\mathbb{R}^n$ with positive coordinates and a number $\alpha\in[0,1)$ such that $Ew\leqslant\alpha w$
is equivalent to $r(E)<1$.
More precisely, if such $w$ and $\alpha$ exist, then $r(E)\leqslant\alpha<1$ (see \cite[Corollary 8.1.29]{HornJohnson13}).
Conversely, suppose that $r(E)<1$.
If $E$ is irreducible, then one can take $\alpha$ to be $r(E)$ and $w$ a Perron-Frobenius vector of $E$.
If $E$ is not irreducible, then consider the matrix $E'=(e_{ij}+\delta)$ for sufficiently small $\delta>0$,
and take $\alpha$ to be $r(E')$ and $w$ a Perron-Frobenius vector of $E'$.

Similarly,
the existence of a vector $w$ with positive coordinates and a number $\alpha\in[0,1)$ such that $E^t w\leqslant \alpha w$
is equivalent to $r(E)<1$.
\end{remark}

Using Theorem~\ref{Th:rate Banach}, we can get concrete estimates for the rate of convergence of $I-(I-A)^N$ to $P$.
Suppose $E$ is irreducible and $r(E)<1$.
Take $\alpha$ to be $r(E)$ and $w$ a Perron-Frobenius vector of $E$.
Then we get
$$\|I-(I-A)^N-P\|\leqslant(w_1+...+w_n)\max\{(1/w_1)\|P_1\|,...,(1/w_n)\|P_n\|\}\frac{(r(E))^N}{1-r(E)}.$$
Similarly, we can take $\alpha$ to be $r(E)$ and $w$ a Perron-Frobenius vector of $E^t$.
Then we get
$$\|I-(I-A)^N-P\|\leqslant(w_1\|P_1\|+...+w_n\|P_n\|)\max\{(1/w_1),...,(1/w_n)\}\frac{(r(E))^N}{1-r(E)}.$$

\begin{remark}
In the study of Questions 1 and 2 one can assume that $E$ is irreducible.
Indeed, suppose that $E$ is reducible and $r(E)<1$.
Then, up to a permutation of the subspaces $X_1,...,X_n$, the matrix $E$ has the form
$$E=\begin{pmatrix}
E_1   &*     & ...  &*\\
0     &E_2   &\ddots&\vdots\\
\vdots&\ddots&\ddots&*\\
0     &...   &0     &E_m
\end{pmatrix},$$
where $E_1,...,E_m$ are irreducible and $r(E_i)<1$ for $i=1,...,m$.
Now we apply Theorem~\ref{Th:Banach} to the first group of subspaces (i.e. $X_1,...,X_{n_1}$, where $n_1$ is order of the matrix $E_1$)
with the corresponding matrix $E_1$.
Then we see that their sum $\widetilde{X}_1$ is complemented in $X$
and $I-(I-A_1)^N$ converges to a projection $\widetilde{P}_1$ onto $\widetilde{X}_1$ as $N\to\infty$.
Similarly, we apply Theorem~\ref{Th:Banach} to each of the remaining $m-1$ groups of subspaces.
Then we see that $\widetilde{X}_i$, the sum of subspaces of the $i$-th group, is complemented in $X$ and
$I-(I-A_i)^N$ converges to a projection $\widetilde{P}_i$ onto $\widetilde{X}_i$ as $N\to\infty$, $i=1,...,m$.
Clearly, $\widetilde{P}_i|_{\widetilde{X}_j}=0$ for every pair $i>j$.
Now observation (2) in Subsection \ref{ss:B trivial observations} shows that $\widetilde{X}_1+...+\widetilde{X}_m=X_1+...+X_n$ is complemented in $X$ and
$$I-(I-\widetilde{P}_1)...(I-\widetilde{P}_m)$$
is a projection onto $X_1+...+X_n$.
\end{remark}

\subsection{On the necessity of the condition $r(E)<1$}\label{ss:B necessity r(E)<1}

The assumption $r(E)<1$ is a \textit{sharp} sufficient condition for $X_1+...+X_n$ to be complemented in $X$.
More precisely, we have the following result.

\begin{theorem}\label{Th:Banach r(E)=1}
Let $E=(e_{ij})$ be an $n\times n$ matrix with $e_{ii}=0$ for $i=1,...,n$ and $e_{ij}\geqslant 0$ for every pair $i\neq j$, $i,j\in\{1,...,n\}$.
If $r(E)=1$, then there exist a Banach space $X$, complemented subspaces $X_1,...,X_n$ of $X$ and projections $P_1,...,P_n$ onto $X_1,...,X_n$, respectively, such that
\begin{enumerate}
\item
$\|P_i x\|=e_{ij}\|x\|$, $x\in X_j$, for each pair $i\neq j$, $i,j\in\{1,...,n\}$;
\item
$X_1,...,X_n$ are linearly independent;
\item
$X_1+...+X_n$ is closed and not complemented in $X$.
\end{enumerate}
\end{theorem}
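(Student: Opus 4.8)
The plan is to assemble the example from two ingredients: a finite, purely ``numerical'' pattern that reproduces the matrix $E$ exactly through the restricted norms of projections, and one fixed closed uncomplemented subspace $Y$ of a Banach space $Z$ (for instance $c_0\subset\ell^\infty$), which will be the sole source of the non-complementation. First I would dispose of the reducible case: after a permutation of $\{1,\dots,n\}$ the matrix $E$ becomes block upper triangular with irreducible diagonal blocks, those of spectral radius $<1$ are handled by Theorem~\ref{Th:Banach} and glued in by observation (2) of Subsection~\ref{ss:B trivial observations}, so it suffices to treat an irreducible $E$ with $r(E)=1$. For such $E$ the Perron--Frobenius theorem provides strictly positive right and left eigenvectors $v,w$ with $Ev=v$ and $E^{t}w=w$; these single out the unique resonant direction that the construction must exploit.

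To reproduce $E$ exactly I would first work with lines $X_i=\mathbb{R}u_i$ and projections $P_ix=f_i(x)u_i$ with $f_i(u_i)=1$, so that $\|P_i x\|=|f_i(u_j)|\,(\|u_i\|/\|u_j\|)\,\|x\|$ for $x\in X_j$; prescribing the numbers $|f_i(u_j)|$ and the norms $\|u_i\|$ then makes condition (1) hold as an exact equality, not merely as the inequality~\eqref{eq:varepsilon_ij}. The role of the hypothesis is already visible here: with $A=P_1+\dots+P_n$ one has $A=I+M$ where $|M|=E$ entrywise, so $r(E)=1$ is precisely what puts a unimodular eigenvalue of $I-A=-M$ along the Perron vector $v$, and this is exactly the obstruction to the convergence of $I-(I-A)^N$ in Theorem~\ref{Th:Banach}. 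The whole difficulty is to turn this borderline resonance into an actual failure of complementation, and for that the finite model must be spread over the pair $Y\subset Z$.

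For $n=2$ this can be carried out by hand, which is the model I would generalise. Take $X=Y\times Z$ with the weighted norm
\[
\|(a,b)\|=\max\{\|a\|,\,t\|b\|\},\qquad t>0,
\]
and set $X_1=Y\times\{0\}$, $X_2=\{(y,y):y\in Y\}$, $P_1(a,b)=(a,0)$, $P_2(a,b)=(a,a)$. A direct computation gives $\varepsilon_{12}=1/\max\{1,t\}$ and $\varepsilon_{21}=\max\{1,t\}$, so $\varepsilon_{12}\varepsilon_{21}=1=r(E)^2$, while $X_1+X_2=Y\times Y$ is closed and uncomplemented exactly because $Y$ is uncomplemented in $Z$. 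For general $n$ I would place $n$ copies of $Y$ as subspaces of a product of one $Y$-factor and several $Z$-factors, choosing the scalar pattern and the weights (guided by $v$ and $w$) so that the restricted-norm matrix is $E$ and the algebraic sum is forced onto a copy of $Y\times\dots\times Y$ sitting uncomplementedly inside the ambient product. One then verifies, in turn, that each $P_i$ is bounded (it reads off a $Y$-valued coordinate), that the restricted norms equal the $e_{ij}$, and that the sum is uncomplemented by deriving from any hypothetical complement a bounded projection of $Z$ onto $Y$.

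I expect the promotion step to be the main obstacle, for two reasons. First, one cannot hope to make the sum merely non-closed: already for $n=2$, if $x_1^{(m)}+x_2^{(m)}\to0$ with the summands bounded away from $0$, the reverse triangle inequality forces $\|x_1^{(m)}\|-\|x_2^{(m)}\|\to0$, whereas pairing $a\le Ea$ with the positive left eigenvector $w$ forces the limiting norm vector to equal a multiple of the Perron vector $v$, whose coordinates are in the ratio $\varepsilon_{12}\ne1$; these are incompatible, so the sum is closed and its non-complementation must be imported from $Y\subset Z$ rather than produced by a vanishing angle. Second, matching all $n(n-1)$ entries of $E$ exactly while keeping every $P_i$ bounded is rigid: a naive one-parameter family of ``slopes'' in $Y\times Z$ realises only those matrices all of whose cycle products $e_{i_1i_2}e_{i_2i_3}\cdots e_{i_mi_1}$ equal $1$, whereas $r(E)=1$ only forces them to be $\le 1$. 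Gaining the extra freedom---by enlarging the number of $Z$-factors, equivalently by letting the norm depend on direction inside the infinite-dimensional $Y$---and then checking that the restricted norms come out as exact equalities and that no non-block projection can complement the sum, is the delicate heart of the argument; I would finish by confirming that the reducible reduction glues correctly even when a Perron vector has zero coordinates.
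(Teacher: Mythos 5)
Your overall strategy --- importing non-complementation from a single fixed pair $Y\subset Z$ and realizing the matrix $E$ exactly through a finite-dimensional pattern of ``lines over $Y$'' --- is the same as the paper's, and your $n=2$ example is correct. But for general $n$ you stop precisely where the real work lies: you yourself observe that a coordinate-axes-with-slopes construction only realizes matrices whose cycle products $e_{i_1i_2}\cdots e_{i_mi_1}$ all equal $1$, and you leave the ``delicate heart'' of gaining the missing freedom unresolved. The paper fills this gap with a short linear-algebra observation that your proposal never reaches: since $E\geqslant 0$ and $r(E)=1$, Perron--Frobenius theory gives that $1$ is an \emph{eigenvalue} of $E$, so $I-E$ is singular and its transposed rows $f^{(i)}$ span a subspace of $\mathbb{R}^n$ of dimension at most $n-1$; hence there is an orthogonal map $T$ of $\mathbb{R}^n$ carrying that span into the hyperplane $\{u\in\mathbb{R}^n\mid u_n=0\}$. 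Setting $u^{(i)}=Tf^{(i)}$ and $v^{(j)}=Tg^{(j)}$ (with $g^{(j)}$ the standard basis), one gets an orthonormal system $v^{(1)},...,v^{(n)}$ with $\langle v^{(i)},u^{(i)}\rangle=1$, $|\langle v^{(j)},u^{(i)}\rangle|=e_{ij}$, and every $u^{(i)}$ having last coordinate $0$. On $X=Y^{n-1}\times Z$ with the $\ell^2$-type norm one then takes $X_i=\{yv^{(i)}\mid y\in Y\}$ and $P_ix=\langle x,u^{(i)}\rangle v^{(i)}$: boundedness is immediate, the equalities $\|P_ix\|=e_{ij}\|x\|$ hold exactly because each $v^{(j)}$ is a unit vector, and since $u^{(i)}_n=0$ no $P_i$ sees the $Z$-coordinate, so $X_1+...+X_n=Y^n$, which is uncomplemented in $Y^{n-1}\times Z$. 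All $n(n-1)$ entries of $E$ are matched simultaneously because they appear directly as the inner products $\langle v^{(j)},u^{(i)}\rangle$; the freedom you were missing is exactly that the lines must be allowed to point in arbitrary (orthonormal) directions of $\mathbb{R}^n$, with the dual functionals read off from the rows of $I-E$, rather than along coordinate axes.

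A second, more local defect: your opening reduction to irreducible $E$ does not work as stated. Theorem~\ref{Th:Banach} and observation (2) of Subsection~\ref{ss:B trivial observations} are \emph{positive} results --- they produce complemented sums --- so they cannot ``glue in'' the diagonal blocks of spectral radius $<1$ when the goal is a counterexample; moreover, the theorem to be proved demands the exact equalities $\|P_ix\|=e_{ij}\|x\|$ also for pairs $i,j$ lying in different diagonal blocks (the ``$*$'' entries of your block-triangular form), which no gluing of separate constructions addresses. The paper needs no such reduction and no Perron eigenvectors at all: the only spectral input is that $r(E)=1$ forces $I-E$ to be singular, which holds whether or not $E$ is irreducible.
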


\begin{remark}
In the case when $r(E)>1$ the theorem can be applied to the matrix $(1/r(E))E$.
\end{remark}

\subsection{Proof of Theorems~\ref{Th:Banach} and~\ref{Th:rate Banach}}\label{ss:B proof 1,2}

First, we will prove Theorem~\ref{Th:Banach} and the first part of Theorem~\ref{Th:rate Banach}.
Thus we assume that a vector $w=(w_1,...,w_n)^t$ with positive coordinates and a number $\alpha\in[0,1)$ satisfy $Ew\leqslant\alpha w$.

Let $X_1\times...\times X_n$ be the linear space of all vector-columns $(x_1,...,x_n)^t$ with $x_1\in X_1,...,x_n\in X_n$
endowed with the weighted $\infty$-norm
$$\|(x_1,...,x_n)^t\|=\max\{(1/w_1)\|x_1\|,...,(1/w_n)\|x_n\|\}.$$
Then, obviously, $X_1\times...\times X_n$ is a Banach space.
Define the operator $S:X_1\times...\times X_n\to X$ by
$$S(x_1,...,x_n)^t=x_1+...+x_n,\quad x_1\in X_1,...,x_n\in X_n,$$
and the operator $J:X\to X_1\times...\times X_n$ by
$$Jx=(P_1 x,...,P_n x)^t,\quad x\in X.$$
Then $SJ=P_1+...+P_n=A$.
Set
$$G=JS:X_1\times...\times X_n\to X_1\times...\times X_n.$$
Let $(G_{ij}:X_j\to X_i\mid i,j=1,...,n)$ be the block decomposition of $G$.
It is clear that $G_{ij}$ acts as $P_i$ on $X_j$.
In particular, $G_{ii}=I$ for $i=1,...,n$.

Let us show that $G$ is invertible.
To this end we will estimate $\|G-I\|$.
For the block decomposition of $G-I$ we have $(G-I)_{ii}=0$ for $i=1,...,n$, and $(G-I)_{ij}=G_{ij}$ for $i\neq j$.
Then $\|(G-I)_{ij}\|\leqslant\varepsilon_{ij}$ for $i\neq j$ and thus $\|(G-I)_{ij}\|\leqslant e_{ij}$ for every pair $i,j$.
It follows easily that $\|G-I\|\leqslant\|E\|$, where $\|E\|$ is the operator norm of the matrix $E$ considered as the operator
on the space $\mathbb{R}^n$ endowed with the weighted $\infty$-norm $\|u\|=\max\{(1/w_1)|u_1|,...,(1/w_n)|u_n|\}$, $u=(u_1,...,u_n)^t\in\mathbb{R}^n$.
But
$$\|E\|=\max\left\{(e_{i1}w_1+e_{i2}w_2+...+e_{in}w_n)/w_i\mid i=1,...,n\right\}\leqslant\alpha.$$
Therefore $\|G-I\|\leqslant\alpha<1$.
Consequently, $G$ is invertible and
\begin{equation}\label{eq:G^-1}
G^{-1}=(I-(I-G))^{-1}=\sum_{k=0}^\infty (I-G)^k,
\end{equation}
where the series converges uniformly.

Since $G$ is invertible, we conclude that $\ker(G)=\{0\}$.
Thus $\ker(S)=\{0\}$.
It follows that $X_1,...,X_n$ are linearly independent.

Now we claim that $P=SG^{-1}J:X\to X$ is a projection onto $X_1+...+X_n$.
Indeed, the operator $P$ has the following properties:
\begin{enumerate}
\item
$P$ is a continuous linear operator;
\item
$Ran(P)\subset Ran(S)=X_1+...+X_n$;
\item
$Px=x$ for every $x\in X_1+...+X_n$.
Indeed, $x=Sv$ for some $v\in X_1\times...\times X_n$.
Then
$$Px=SG^{-1}JSv=SG^{-1}Gv=Sv=x.$$
\end{enumerate}
These three properties of $P$ imply that $P$ is a projection onto $X_1+...+X_n$.
Hence $X_1+...+X_n$ is complemented in $X$.

Further, $\ker(P)$ is a complement of $X_1+...+X_n$ in $X$.
It is easily seen that
$$\ker(P)=\ker(J)=\ker(P_1)\cap...\cap\ker(P_n).$$
Hence, $\ker(P_1)\cap...\cap\ker(P_n)$ is a complement of $X_1+...+X_n$ in $X$ and
$P$ is the projection onto $X_1+...+X_n$ along $\ker(P_1)\cap...\cap\ker(P_n)$.

Let us show that the sequence of operators $I-(I-A)^N$ converges uniformly to $P$ as $N\to\infty$.
Using~\eqref{eq:G^-1} we get
$$P=S\left(\sum_{k=0}^\infty (I-G)^k\right)J=\lim_{N\to\infty}S\left(\sum_{k=0}^{N-1}(I-G)^k\right)J.$$
Since
$$S(I-G)=S(I-JS)=(I-SJ)S=(I-A)S$$
we see that $S(I-G)^k=(I-A)^k S$ for $k=0,1,...$.
Therefore
\begin{align*}
P&=\lim_{N\to\infty}\left(\sum_{k=0}^{N-1}(I-A)^k S\right)J=\lim_{N\to\infty}\left(\sum_{k=0}^{N-1}(I-A)^k\right)A=\\
&=\lim_{N\to\infty}\left(\sum_{k=0}^{N-1}(I-A)^k\right)(I-(I-A))=\lim_{N\to\infty}(I-(I-A)^N).
\end{align*}
This finishes the proof of Theorem~\ref{Th:Banach}.

It remains to estimate $\|I-(I-A)^N-P\|$.
We have
$$\|I-(I-A)^N-P\|=\|S\left(\sum_{k=N}^\infty (I-G)^k\right)J\|\leqslant \|S\|\|J\|\sum_{k=N}^\infty\|I-G\|^k.$$
From the definitions of $S$ and $J$ we have
$$\|S\|\leqslant w_1+...+w_n$$
and
$$\|J\|=\max\{(1/w_1)\|P_1\|,...,(1/w_n)\|P_n\|\}.$$
Also, recall that $\|G-I\|\leqslant\alpha$.
Therefore
$$\|I-(I-A)^N-P\|\leqslant(w_1+...+w_n)\max\{(1/w_1)\|P_1\|,...,(1/w_n)\|P_n\|\}\frac{\alpha^N}{1-\alpha}.$$
This finishes the proof of the first part of Theorem~\ref{Th:rate Banach}.

The proof of the second part of Theorem~\ref{Th:rate Banach} follows the same lines as the one for the first part
but with the only difference: instead of the weighted $\infty$-norm on the linear space $X_1\times...\times X_n$
one should consider the weighted 1-norm
$$\|(x_1,...,x_n)^t\|=w_1\|x_1\|+...+w_n\|x_n\|.$$

\subsection{Proof of Theorem~\ref{Th:Banach r(E)=1}}\label{ss:B proof 3}

Our construction of a space $X$, its subspaces $X_1,...,X_n$ and projections $P_1,...,P_n$ is based on the following simple observation.
Let $\langle\cdot,\cdot\rangle$ be the standard inner product in $\mathbb{R}^n$, i.e.,
$$\langle u,v\rangle=u_1 v_1+...+u_n v_n,$$
where $u=(u_1,...,u_n)^t$ and $v=(v_1,...,v_n)^t$.
Each nonzero vector $v\in\mathbb{R}^n$ spans the one-dimensional subspace
$$L_{\mathbb{R}}(v)=\{\lambda v\mid\lambda\in\mathbb{R}\}=\{(\lambda v_1,...,\lambda v_n)^t\mid\lambda\in\mathbb{R}\}.$$
If a vector $u\in\mathbb{R}^n$ satisfies $\langle v,u\rangle=1$, then the mapping
$$x\mapsto \langle x,u\rangle v,\quad x\in\mathbb{R}^n$$
is a projection onto $L_{\mathbb{R}}(v)$.

To construct a space $X$, its subspaces $X_1,...,X_n$ and projections $P_1,...,P_n$ we need
two collections of vectors $u^{(i)}\in\mathbb{R}^n$, $i=1,...,n$ and $v^{(j)}\in\mathbb{R}^n$, $j=1,...,n$
which have the following properties:
\begin{enumerate}
\item
$v^{(1)},...,v^{(n)}$ are unit basis vectors of $\mathbb{R}^n$;
\item
$\langle v^{(i)},u^{(i)}\rangle=1$ for $i=1,...,n$ and
$|\langle v^{(j)},u^{(i)}\rangle|=e_{ij}$ for each pair $i\neq j$, $i,j\in\{1,...,n\}$;
\item
the $n$-th coordinate of the vectors $u^{(1)},...,u^{(n)}$ equals $0$.
\end{enumerate}
Such vectors can be constructed as follows.
Let $f^{(i)}\in\mathbb{R}^n$ be the transpose of the $i$-th row of the matrix $I-E$, i.e.,
$$f^{(i)}=(-e_{i1},...,-e_{i,i-1},1,-e_{i,i+1},...,-e_{i,n})^t,\qquad i=1,...,n.$$
Denote by $g^{(j)}$, $j=1,...,n$, the standard unit basis vectors of $\mathbb{R}^n$, i.e.,
$$g^{(j)}=(0,...,0,1,0,...,0)^t,\qquad j=1,...,n,$$
where the $1$ is in the $j$-th position.
Clearly, $\langle g^{(i)},f^{(i)}\rangle=1$ for $i=1,...,n$ and
$\langle g^{(j)},f^{(i)}\rangle=-e_{ij}$ for each pair $i\neq j$, $i,j\in\{1,...,n\}$.
Further, since $E$ is a nonnegative matrix, we conclude that $r(E)=1$ is an eigenvalue of $E$.
This means that the matrix $I-E$ is singular, i.e., the vectors $f^{(1)},...,f^{(n)}$ are linearly dependent.
It follows that the dimension of the linear span of $f^{(1)},...,f^{(n)}$ is not greater than $n-1$.
Thus there exists a unitary operator $T:\mathbb{R}^n\to\mathbb{R}^n$ such that
$T(\text{linear span of } f^{(1)},...,f^{(n)})$ is contained in the hyperplane
$\{u=(u_1,...,u_n)^t\in\mathbb{R}^n\mid u_n=0\}$.
Set $u^{(i)}=Tf^{(i)}$, $i=1,...,n$ and $v^{(j)}=Tg^{(j)}$, $j=1,...,n$.
It is clear that these two collections of vectors have the required properties.

Now we are ready to construct a space $X$, its subspaces $X_1,...,X_n$ and projections $P_1,...,P_n$.
Let $Y$ be a closed uncomplemented subspace of a Banach space $Z$.
Define $X$ to be the linear space
$$\underbrace{Y\times...\times Y}_{n-1}\times Z$$
of all vector-columns $x=(y_1,...,y_{n-1},z)^t$ with $y_1\in Y,...,y_{n-1}\in Y,z\in Z$ endowed with the norm
$$\|x\|=(\|y_1\|^2+...+\|y_{n-1}\|^2+\|z\|^2)^{1/2}.$$
Then, obviously, $X$ is a Banach space.

To make our construction of subspaces $X_1,...,X_n$ and projections $P_1,...,P_n$ more clear we introduce the following notation.
For $y\in Y$ and $v=(v_1,...,v_n)^t\in\mathbb{R}^n$ we set
$$yv:=(v_1 y,...,v_n y)^t.$$
For $x=(y_1,...,y_{n-1},z)\in X$ and $u=(u_1,...,u_n)^t\in\mathbb{R}^n$ set
$$\langle x,u\rangle=u_1 y_1+...+u_{n-1}y_{n-1}+u_n z.$$
Now for each $i=1,...,n$ we define the subspace $X_i$ of $X$ by
$$X_i=L_{Y}(v^{(i)})=\{yv^{(i)}\mid y\in Y\}=\{(v^{(i)}_1 y,...,v^{(i)}_n y)^t\mid y\in Y\}$$
and the projection $P_i:X\to X$ onto $X_i$ by
\begin{align*}
P_i x&=\langle x,u^{(i)}\rangle v^{(i)}=\\
&=(v^{(i)}_1(u^{(i)}_1 y_1+...+u^{(i)}_{n-1}y_{n-1}+u^{(i)}_n z),...,v^{(i)}_n(u^{(i)}_1 y_1+...+u^{(i)}_{n-1}y_{n-1}+u^{(i)}_n z))=\\
&=(v^{(i)}_1(u^{(i)}_1 y_1+...+u^{(i)}_{n-1}y_{n-1}),...,v^{(i)}_n(u^{(i)}_1 y_1+...+u^{(i)}_{n-1}y_{n-1})).
\end{align*}

Let us show that $\|P_i x\|=e_{ij}\|x\|$, $x\in X_j$, for each pair $i\neq j$, $i,j\in\{1,...,n\}$.
Consider arbitrary $x\in X_j$.
Then $x=yv^{(j)}$ for some $y\in Y$.
Since $v^{(j)}$ is a unit vector, we see that $\|x\|=\|y\|$.
We have
$$P_i x=\langle yv^{(j)},u^{(i)}\rangle v^{(i)}=(\langle v^{(j)},u^{(i)}\rangle y)v^{(i)}.$$
Therefore
$$\|P_i x\|=\|\langle v^{(j)},u^{(i)}\rangle y\|=|\langle v^{(j)},u^{(i)}\rangle|\|y\|=e_{ij}\|x\|.$$

Since $v^{(1)},...,v^{(n)}$ are linearly independent, we conclude that $X_1,...,X_n$ are linearly independent and $X_1+...+X_n=Y\times...\times Y$.
Thus $X_1+...+X_n$ is closed in $X$.
Recall that $Y$ is not complemented in $Z$;
it follows that $X_1+...+X_n$ is not complemented in $X=Y\times...\times Y\times Z$.

\section{Sums of marginal subspaces}\label{s:marginal}

\subsection{Definitions}

Let $(\Omega,\mathcal{F},\mu)$ be a probability space.
Denote by $\mathbb{K}$ a base field of scalars, i.e., $\mathbb{R}$ or $\mathbb{C}$.
For an $\mathcal{F}$-measurable function (random variable) $\xi:\Omega\to\mathbb{K}$ denote by $E\xi$ the expectation of $\xi$ (if it exists).
Two random variables $\xi$ and $\eta$ are said to be equivalent if $\xi(\omega)=\eta(\omega)$ for $\mu$-almost all $\omega$.
For $p\in [1,\infty)\cup\{\infty\}$ denote by $L^p(\mathcal{F})=L^p(\Omega,\mathcal{F},\mu)$ the set of equivalence classes of
random variables $\xi:\Omega\to\mathbb{K}$ such that $E|\xi|^p<\infty$ if $p\in[1,\infty)$ and $\xi$ is $\mu$-essentially bounded if $p=\infty$.
For $\xi\in L^p(\mathcal{F})$ set $\|\xi\|_p=(E|\xi|^p)^{1/p}$ if $p\in[1,\infty)$ and $\|\xi\|_\infty=\text{ess sup}|\xi|$ if $p=\infty$.
Then $L^p(\mathcal{F})$ is a Banach space.
For every sub-$\sigma$-algebra $\mathcal{A}$ of $\mathcal{F}$ we define the marginal subspace corresponding to $\mathcal{A}$,
$L^p(\mathcal{A})$, as follows.
$L^p(\mathcal{A})$ consists of elements (equivalence classes) of $L^p(\mathcal{F})$ which contain at least one $\mathcal{A}$-measurable random variable.
It is clear that $L^p(\mathcal{A})$ is a complemented subspace in $L^p(\mathcal{F})$
(the conditional expectation operator $\xi\mapsto E(\xi|\mathcal{A})$ is a norm one projection onto $L^p(\mathcal{A})$).
Denote by $L^p_0(\mathcal{A})$ the subspace of all $\xi\in L^p(\mathcal{A})$ with $E\xi=0$.
This subspace is also complemented in $L^p(\mathcal{F})$
(the centered conditional expectation operator $\xi\mapsto E(\xi|\mathcal{A})-E\xi$ is a projection onto $L^p_0(\mathcal{A})$).

\subsection{Formulation of the problem}

In this section we study the following problem.
Let $\mathcal{F}_1,...,\mathcal{F}_n$ be sub-$\sigma$-algebras of $\mathcal{F}$.
Question: when is the sum of the corresponding marginal subspaces, $L^p(\mathcal{F}_1)+...+L^p(\mathcal{F}_n)$,
complemented in $L^p(\mathcal{F})$?
Since $L^p(\mathcal{F}_i)=L^p_0(\mathcal{F}_i)+\langle 1\rangle$
(here $\langle 1\rangle$ is the subspace spanned by $1$, i.e., the subspace of constant random variables),
we see that $L^p(\mathcal{F}_1)+...+L^p(\mathcal{F}_n)=L^p_0(\mathcal{F}_1)+...+L^p_0(\mathcal{F}_n)+\langle 1\rangle$.
It follows easily that $L^p(\mathcal{F}_1)+...+L^p(\mathcal{F}_n)$ is complemented in $L^p(\mathcal{F})$ if and only if
$L^p_0(\mathcal{F}_1)+...+L^p_0(\mathcal{F}_n)$ is.

Since each complemented subspace is closed, the question on complementability of the sum of marginal subspaces
is closely related to the question on closedness of the sum (and for $p=2$ these questions coincide).
Of course, $L^p(\mathcal{F}_1)+...+L^p(\mathcal{F}_n)$ is closed in $L^p(\mathcal{F})$ if and only if $L^p_0(\mathcal{F}_1)+...+L^p_0(\mathcal{F}_n)$ is.

The question on closedness of the sum of marginal subspaces arises, for example, in

(1) additive modeling.
Here each sub-$\sigma$-algebra $\mathcal{F}_i=\sigma a(\xi_i)$, the $\sigma$-algebra generated by a random variable $\xi_i$.
Consequently, each marginal subspace $L^p(\mathcal{F}_i)$
consists of (equivalence classes of) Borel measurable transformations of $\xi_i$, $f(\xi_i)$, which belong to $L^p(\mathcal{F})$.
As Andreas Buja writes in \cite[Subsection 8.1]{Buja96},
the question on closedness of $L^2_0(\mathcal{F}_1)+...+L^2_0(\mathcal{F}_n)$ is a technicality
that is at the heart of all additive modeling, including ACE (alternating conditional expectations method),
GAMs (generalized additive models) and PPR (projection pursuit regression).

(2) theory of ridge functions. See, e.g., \cite[Chapter 7]{Pinkus15}.
Note that every subspace of ridge functions $L^p(a;K)$ can be considered as marginal.

The question on closedness is not trivial; examples when $L^p(\mathcal{F}_1)+L^p(\mathcal{F}_2)$ is not closed in $L^p(\mathcal{F})$
can be found in \cite[Proposition 4.4(a)]{Ruschendorf98} (for $p\in [1,\infty)$),
\cite[Subsection 8.3]{Buja96} (for $p=2$), \cite[Section 7.2]{Pinkus15} (for $p\in [1,\infty)\cup \{\infty\}$).

Even for simple and natural $(\Omega,\mathcal{F},\mu)$ and $\mathcal{F}_1,\mathcal{F}_2$ the questions on closedness and complementability of
$L^p(\mathcal{F}_1)+L^p(\mathcal{F}_2)$ in $L^p(\mathcal{F})$ can be nontrivial.
As an example, we formulate the following problem.
Let $\Omega=\mathbb{N}\times\mathbb{N}$ and $\mathcal{F}=2^\Omega$.
Then a probability measure $\mu$ is defined by a set of numbers $\mu(\{(i,j)\})=p_{ij}\geqslant 0$, $i,j=1,2,...$ with $\sum_{i,j=1}^\infty p_{ij}=1$.
For simplicity, we assume that all $p_{ij}>0$.
Then the space $L^p(\mathcal{F})$ consists of the functions $f:\mathbb{N}\times\mathbb{N}\to\mathbb{K}$ for which
$\sum_{i,j=1}^\infty|f(i,j)|^p p_{ij}<+\infty$.
Let $\mathcal{F}_1$ be the $\sigma$-algebra generated by the partition $\{i\}\times\mathbb{N}$, $i=1,2,...$ of $\Omega$,
$\mathcal{F}_2$ be the $\sigma$-algebra generated by the partition $\mathbb{N}\times\{j\}$, $j=1,2,...$.
Then the marginal subspace $L^p(\mathcal{F}_1)$ consists of the functions $f$ of the form $f(x,y)=\varphi(x)$ for which
$\sum_{i=1}^\infty|\varphi(i)|^p a_i<+\infty$ where the marginal probabilities $a_i=\sum_{j=1}^\infty p_{ij}$, $i=1,2,...$.
Similarly, the marginal subspace $L^p(\mathcal{F}_2)$ consists of the functions $f$ of the form $f(x,y)=\psi(y)$ for which
$\sum_{j=1}^\infty|\psi(j)|^p b_j<+\infty$ where the marginal probabilities $b_j=\sum_{i=1}^\infty p_{ij}$, $j=1,2,...$.
Questions: when (i.e., for which $\mu$) the subspace $L^p(\mathcal{F}_1)+L^p(\mathcal{F}_2)$ is closed in $L^p(\mathcal{F})$?
When the subspace $L^p(\mathcal{F}_1)+L^p(\mathcal{F}_2)$ is complemented in $L^p(\mathcal{F})$?
We don't know.
Note that from Theorem~\ref{Th:Marginal} below it follows that if there exists a number $\alpha>0$ such that $p_{ij}\geqslant\alpha a_i b_j$
for all $i,j=1,2,...$, then $L^p(\mathcal{F}_1)+L^p(\mathcal{F}_2)$ is complemented in $L^p(\mathcal{F})$ for all $p\in[1,\infty)\cup\{\infty\}$.
Even for these $(\Omega,\mathcal{F})$ there are many similar problems.
For example, one can consider marginal subspaces of functions $f$ of the form $f(x,y)=\varphi(ax+by)$, $a,b\in\mathbb{Z}$,
$f(x,y)=\varphi(x^2+y^2)$, $f(x,y)=\varphi(y/x)$, etc.
Of course, similar questions can be posed for $n$ marginal subspaces.

\subsection{Results}
In this subsection we provide a sufficient condition for marginal subspaces $L^p_0(\mathcal{F}_1),...,L^p_0(\mathcal{F}_n)$ to be linearly independent and
their sum, $L^p_0(\mathcal{F}_1)+...+L^p_0(\mathcal{F}_n)$, to be complemented in $L^p(\mathcal{F})$ (see Theorem~\ref{Th:Marginal}).

A starting point for our result is the following simple observation: if the $\sigma$-algebras $\mathcal{F}_1,...,\mathcal{F}_n$ are
pairwise independent, then the subspaces $L^p_0(\mathcal{F}_1),...,L^p_0(\mathcal{F}_n)$ are linearly independent and their sum is complemented in $L^p(\mathcal{F})$.
This follows from observation (1) in Subsection~\ref{ss:B trivial observations}.
To see this, note that the centered conditional expectation operator $\xi\mapsto E(\xi|\mathcal{F}_i)-E\xi$
is a projection onto $L^p_0(\mathcal{F}_i)$ in $L^p(\mathcal{F})$.
Denote this operator by $P_i$.
If $\xi\in L^p_0(\mathcal{F}_j)$, $j\neq i$, then, due to independence of $\mathcal{F}_i$ and $\mathcal{F}_j$, we have $P_i\xi=E\xi-E\xi=0$.
Thus we can apply observation (1) from Subsection~\ref{ss:B trivial observations}.

Now, it is natural to think that if the $\sigma$-algebras $\mathcal{F}_1,...,\mathcal{F}_n$ are pairwise "little dependent",
then the corresponding marginal subspaces will be linearly independent and their sum will be complemented in $L^p(\mathcal{F})$.
To specify the meaning of the fuzzy words "little dependent" we first present the result of Peter J.~Bickel, Ya'akov Ritov and Jon A.~Wellner
on the closedness of the sum of two marginal subspaces in $L^2(\mathcal{F})$ (see \cite[p.1332, Proof of Lemma 1]{Bickel91}).

Let $(\Omega_1,\mathcal{A},\mu_1)$ and $(\Omega_2,\mathcal{B},\mu_2)$ be two probability spaces.
Set $(\Omega,\mathcal{F})=(\Omega_1\times\Omega_2,\mathcal{A}\otimes\mathcal{B})$.
Suppose $\mu$ is a probability measure on $\mathcal{A}\otimes\mathcal{B}$ with marginals $\mu_1$ and $\mu_2$
(that is, $\mu(A\times\Omega_2)=\mu_1(A), A\in\mathcal{A}$ and $\mu(\Omega_1\times B)=\mu_2(B), B\in\mathcal{B}$).
Let $\mathcal{F}_1=\mathcal{A}\times\Omega_2=\{A\times\Omega_2\mid A\in\mathcal{A}\}$ and
$\mathcal{F}_2=\Omega_1\times\mathcal{B}=\{\Omega_1\times B\mid B\in\mathcal{B}\}$.
Then $L^2(\mathcal{F}_1)$ consists of (equivalence classes of) random variables of the form $\xi(\omega_1,\omega_2)=f(\omega_1)$ with
$f\in L^2(\Omega_1,\mathcal{A},\mu_1)$.
Similarly, $L^2(\mathcal{F}_2)$ consists of (equivalence classes of) random variables of the form $\xi(\omega_1,\omega_2)=g(\omega_2)$ with
$g\in L^2(\Omega_2,\mathcal{B},\mu_2)$.
Bickel, Ritov and Wellner showed that if there exists $\alpha>0$ such that
$$
\mu(A\times B)\geqslant\alpha\mu_1(A)\mu_2(B), A\in\mathcal{A}, B\in\mathcal{B}
$$
then the subspaces $L^2_0(\mathcal{F}_1)$ and $L^2_0(\mathcal{F}_2)$ are linearly independent (i.e., their intersection is $\{0\}$)
and their sum is closed in $L^2(\mathcal{F})$.

Now we can specify the meaning of the fuzzy words "little dependent" for two sub-$\sigma$-algebras as follows.
Let $(\Omega,\mathcal{F},\mu)$ be a probability space.
For two sub-$\sigma$-algebras $\mathcal{A},\mathcal{B}$ of $\mathcal{F}$ define the following measure of their dependence
$$
\psi'(\mathcal{A},\mathcal{B})=\inf\left\{\dfrac{\mu(A\cap B)}{\mu(A)\mu(B)} \mid A\in\mathcal{A}, B\in\mathcal{B}, \mu(A)>0, \mu(B)>0\right\}.
$$
This measure of dependence is well known (see, e.g., \cite{Bradley05}).
It is easily seen that $0\leqslant \psi'(\mathcal{A},\mathcal{B})\leqslant 1$ and $\psi'(\mathcal{A},\mathcal{B})=1$ if and only if
$\mathcal{A}$ and $\mathcal{B}$ are independent.
Hence, using the coefficient $\psi'$, we can say that $\mathcal{A}$ and $\mathcal{B}$ are "little dependent" if
the number $1-\psi'(\mathcal{A},\mathcal{B})$ is "small".

Let us formulate our result.
Let $\mathcal{F}_1,...,\mathcal{F}_n$ be sub-$\sigma$-algebras of $\mathcal{F}$.
Define the $n\times n$ matrix $E=(e_{ij})$ by
$$e_{ij}=
\begin{cases}
0, &\text{if $i=j$;}\\
1-\psi'(\mathcal{F}_i,\mathcal{F}_j), &\text{if $i\neq j$.}
\end{cases}
$$
It is clear that $E$ is symmetric and nonnegative.
It follows that $r(E)$, the spectral radius of $E$, is the maximum eigenvalue of $E$.

\begin{theorem}\label{Th:Marginal}
If $r(E)<1$, then the marginal subspaces $L^p_0(\mathcal{F}_1),...,L^p_0(\mathcal{F}_n)$ are linearly independent
and their sum is complemented in $L^p(\mathcal{F})$.
\end{theorem}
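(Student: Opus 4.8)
The plan is to deduce Theorem~\ref{Th:Marginal} from Theorem~\ref{Th:Banach}. Set $X=L^p(\mathcal{F})$, $X_i=L^p_0(\mathcal{F}_i)$, and let $P_i$ be the centered conditional expectation $P_i\xi=E(\xi\mid\mathcal{F}_i)-E\xi$, which is a projection onto $X_i$. Because every $\xi\in X_j$ has $E\xi=0$, for such $\xi$ we have $P_i\xi=E(\xi\mid\mathcal{F}_i)$. Thus the whole statement reduces, via Theorem~\ref{Th:Banach}, to the pairwise estimate
\[
\|E(\xi\mid\mathcal{F}_i)\|_p\le\bigl(1-\psi'(\mathcal{F}_i,\mathcal{F}_j)\bigr)\|\xi\|_p=e_{ij}\|\xi\|_p
\]
for all $i\ne j$ and all mean-zero $\mathcal{F}_j$-measurable $\xi$, which is exactly \eqref{eq:varepsilon_ij} with $\varepsilon_{ij}=e_{ij}$. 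Once this holds, $r(E)<1$ gives complementability by Theorem~\ref{Th:Banach}; linear independence is automatic, since the proof of that theorem shows that $G=JS$ is invertible and hence the summation map $S$ is injective, i.e. the sum is direct.

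The core is the pairwise estimate, which I would establish by a coupling decomposition of the conditional expectation operator. Write $\mathcal{A}=\mathcal{F}_i$, $\mathcal{B}=\mathcal{F}_j$ and $c=\psi'(\mathcal{A},\mathcal{B})$; we may assume $c<1$, since $c=1$ forces independence and $e_{ij}=0$. On $(\Omega\times\Omega,\mathcal{A}\otimes\mathcal{B})$ push $\mu$ forward along the diagonal $\omega\mapsto(\omega,\omega)$ to obtain the coupling $\nu(A\times B)=\mu(A\cap B)$, whose marginals are $\mu|_{\mathcal{A}}$ and $\mu|_{\mathcal{B}}$, and let $\rho=(\mu|_{\mathcal{A}})\otimes(\mu|_{\mathcal{B}})$ be the independent coupling. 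By definition of $\psi'$, $\nu(A\times B)\ge c\,\rho(A\times B)$ for all measurable rectangles. The technical heart is to upgrade this rectangle-wise inequality to a genuine domination $\nu\ge c\rho$ of measures on all of $\mathcal{A}\otimes\mathcal{B}$; granting this, $\tilde\nu:=(1-c)^{-1}(\nu-c\rho)$ is again a coupling of $\mu|_{\mathcal{A}}$ and $\mu|_{\mathcal{B}}$, and $\nu=c\rho+(1-c)\tilde\nu$.

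To exploit this convex splitting, associate to each coupling $\pi$ with first marginal $\mu|_{\mathcal{A}}$ the Markov operator $T_\pi\xi=E_\pi[\xi(b)\mid a]$, characterized by $\int\eta\,(T_\pi\xi)\,d\mu|_{\mathcal{A}}=\int\eta(a)\xi(b)\,d\pi$ for $\mathcal{A}$-measurable $\eta$. Since the outer measure $\mu|_{\mathcal{A}}$ is the same for all couplings in play, the right-hand side is linear in $\pi$, so $T_\nu=cT_\rho+(1-c)T_{\tilde\nu}$. Here $T_\nu=E(\cdot\mid\mathcal{A})$ on $\mathcal{B}$-measurable functions, while $T_\rho\xi=E\xi$; for mean-zero $\xi$ the first term vanishes and $E(\xi\mid\mathcal{A})=(1-c)T_{\tilde\nu}\xi$. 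Finally $T_{\tilde\nu}$, being a conditional expectation under the probability measure $\tilde\nu$, is an $L^p$-contraction for every $p\in[1,\infty]$ by conditional Jensen, which yields $\|E(\xi\mid\mathcal{A})\|_p\le(1-c)\|\xi\|_p$, as needed.

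The step I expect to be the main obstacle is the passage from rectangle-wise domination to $\nu\ge c\rho$ on the full product $\sigma$-algebra: nonnegativity of the signed measure $\nu-c\rho$ on rectangles does not by itself propagate to the generated $\sigma$-algebra, as the family $\{C:(\nu-c\rho)(C)\ge0\}$ is not a $\lambda$-system. I would handle this by observing that the ring generated by measurable rectangles consists of finite disjoint unions of rectangles, on which $\nu-c\rho$ is nonnegative by additivity, and then approximating an arbitrary $C\in\mathcal{A}\otimes\mathcal{B}$ by such a set in the $|\nu-c\rho|$-pseudometric, so that $(\nu-c\rho)(C)\ge-\varepsilon$ for every $\varepsilon>0$. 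Everything else---verifying the operator identities, the contraction property, and feeding the estimate into Theorem~\ref{Th:Banach}---is routine.
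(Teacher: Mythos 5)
Your proposal is correct, and its skeleton coincides with the paper's: the same reduction to Theorem~\ref{Th:Banach} via the centered conditional expectations $P_i$, the same pairwise lemma $\|E(\xi|\mathcal{A})\|_p\leqslant(1-\psi'(\mathcal{A},\mathcal{B}))\|\xi\|_p$ for mean-zero $\mathcal{B}$-measurable $\xi$, and the same diagonal pushforward coupling $\nu$ dominated by $c(\mu_1\otimes\mu_2)$. Where you diverge is in extracting the norm estimate from the domination. The paper keeps the signed-measure viewpoint: it writes $E(\xi\eta)=\int\xi(x)\eta(y)\,d(\nu-c(\mu_1\otimes\mu_2))$ (the product-measure term vanishing by Fubini since $E\xi=0$), applies H\"older with respect to the measure $\nu-c(\mu_1\otimes\mu_2)$ --- with separate arguments for $p\in(1,\infty)$, $p=1$, and $p=\infty$ --- and then recovers $\|E(\xi|\mathcal{B})\|_p$ by $L^p$--$L^q$ duality. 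You instead renormalize $\tilde\nu=(1-c)^{-1}(\nu-c\rho)$ into a genuine coupling, obtain the operator identity $E(\cdot|\mathcal{A})=(1-c)T_{\tilde\nu}$ on mean-zero $\mathcal{B}$-measurable functions, and conclude by conditional Jensen. Your route buys a uniform treatment of all $p\in[1,\infty]$ with no duality and no case analysis, at the cost of the (correctly handled) case split $c<1$ needed for the normalization, which the paper's division-free argument avoids. One further point in your favor: the upgrade from rectangle-wise domination to $\nu\geqslant c(\mu_1\otimes\mu_2)$ on all of $\mathcal{A}\otimes\mathcal{B}$, which the paper dispatches with a bare ``It follows that,'' is indeed not a $\pi$--$\lambda$ argument, and your fix --- nonnegativity on the algebra of finite disjoint unions of rectangles plus approximation of an arbitrary measurable set in the $|\nu-c\rho|$-pseudometric --- is the standard and correct way to justify it. Your justification of linear independence (invertibility of $G=JS$ forces injectivity of the summation map $S$) is also exactly what the paper's remark following Theorem~\ref{Th:Banach} relies on.
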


\subsection{On the necessity of the condition $r(E)<1$}
The natural question arises:
is $r(E)<1$ a \emph{sharp} sufficient condition for $L^p_0(\mathcal{F}_1)+...+L^p_0(\mathcal{F}_n)$ to be complemented in $L^p(\mathcal{F})$?
We don't know.
We have the following conjecture (which implies that the answer is positive).

\textbf{Conjecture.}
Let $E=(e_{ij})$ be a symmetric $n\times n$ matrix with $e_{ii}=0$ for $i=1,...,n$ and $e_{ij}\geqslant 0$ for every pair $i\neq j$, $i,j\in\{1,...,n\}$.
If $r(E)=1$, then there exist a probability space $(\Omega,\mathcal{F},\mu)$ and sub-$\sigma$-algebras $\mathcal{F}_1,...,\mathcal{F}_n$ of $\mathcal{F}$
such that $\psi'(\mathcal{F}_i,\mathcal{F}_j)=1-e_{ij}$ for every pair $i\neq j$ and
$L^p_0(\mathcal{F}_1)+...+L^p_0(\mathcal{F}_n)$ is \emph{not closed} in $L^p(\mathcal{F})$ for \emph{arbitrary} $p\in[1,\infty)\cup\{\infty\}$.

\subsection{Proof of Theorem \ref{Th:Marginal}}
To prove Theorem~\ref{Th:Marginal} we will use Theorem~\ref{Th:Banach}.
The centered conditional expectation operator $\xi\mapsto E(\xi|\mathcal{F}_i)-E\xi$ is a projection onto $L^p_0(\mathcal{F}_i)$ in $L^p(\mathcal{F})$.
Denote this operator by $P_i$.
For $\xi\in L^p_0(\mathcal{F}_j)$ we have $P_i\xi=E(\xi|\mathcal{F}_i)$.
Now Theorem~\ref{Th:Marginal} is a direct consequence of Theorem~\ref{Th:Banach} and the following lemma.

\begin{lemma}
Let $\mathcal{A},\mathcal{B}$ be sub-$\sigma$-algebras of $\mathcal{F}$ and $p\in [1,\infty)\cup\{\infty\}$.
Then
\begin{equation}\label{eq:marginal ineq}
\|E(\xi|\mathcal{B})\|_p\leqslant (1-\psi'(\mathcal{A},\mathcal{B}))\|\xi\|_p,\quad \xi\in L^p_0(\mathcal{A}).
\end{equation}
\end{lemma}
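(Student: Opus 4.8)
The plan is to reduce the claimed operator bound to the contractivity of a suitable Markov-type operator. Throughout write $\psi:=\psi'(\mathcal A,\mathcal B)$; one may assume $\psi<1$, since $\psi=1$ forces independence of $\mathcal A$ and $\mathcal B$, whence $E(\xi|\mathcal B)=E\xi=0$ for $\xi\in L^p_0(\mathcal A)$ and \eqref{eq:marginal ineq} is trivial.

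First I would translate the definition of $\psi'$ into a pointwise positivity statement for conditional expectations. By definition $\mu(A\cap B)\geqslant\psi\,\mu(A)\mu(B)$ for all $A\in\mathcal A$ and $B\in\mathcal B$ (this holds trivially when $\mu(A)=0$ or $\mu(B)=0$). Rewriting the left side as $\int_B\mathbf 1_A\,d\mu$ and the right side as $\int_B\psi\mu(A)\,d\mu$, one gets $\int_B\bigl(E(\mathbf 1_A|\mathcal B)-\psi\mu(A)\bigr)\,d\mu\geqslant 0$ for every $B\in\mathcal B$; since the integrand is $\mathcal B$-measurable, this yields $E(\mathbf 1_A|\mathcal B)\geqslant\psi\,E\mathbf 1_A$ a.e. Approximating an arbitrary nonnegative $\mathcal A$-measurable $\xi$ by nonnegative simple $\mathcal A$-measurable functions and passing to the limit (linearity plus the monotone convergence theorem for conditional expectations) extends this to
$$E(\xi|\mathcal B)\geqslant\psi\,E\xi\quad\text{a.e.,}\qquad \xi\in L^p(\mathcal A),\ \xi\geqslant 0.$$

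Next I would introduce the operator $Q\xi:=E(\xi|\mathcal B)-\psi\,E\xi$ on $L^p(\mathcal A)$ and record its three relevant properties: the positivity just proved ($Q\xi\geqslant 0$ for $\xi\geqslant 0$), the normalization $Q\mathbf 1=1-\psi$, and the identity $E[Q\xi]=(1-\psi)E\xi$. Setting $\widetilde Q:=(1-\psi)^{-1}Q$, these say exactly that $\widetilde Q$ is a positive operator with $\widetilde Q\mathbf 1=\mathbf 1$ and $E[\widetilde Q\xi]=E\xi$, i.e.\ a Markov (averaging) operator. For such an operator the pointwise Jensen inequality $\varphi(\widetilde Q\xi)\leqslant\widetilde Q(\varphi\circ\xi)$ holds for every convex $\varphi$, by writing $\varphi$ as the supremum of its affine minorants and using positivity together with $\widetilde Q\mathbf 1=\mathbf 1$. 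Applying this with $\varphi(\cdot)=|\cdot|^p$ (convex for $p\in[1,\infty)$, and valid for complex-valued $\xi$ since $|\cdot|^p$ is convex on $\mathbb C$) and integrating gives $\|\widetilde Q\xi\|_p^p\leqslant E[\widetilde Q(|\xi|^p)]=E|\xi|^p=\|\xi\|_p^p$; the case $p=\infty$ follows directly from $|\widetilde Q\xi|\leqslant\widetilde Q|\xi|\leqslant\|\xi\|_\infty$. Hence $\|Q\|_{L^p(\mathcal A)\to L^p}\leqslant 1-\psi$.

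Finally, for $\xi\in L^p_0(\mathcal A)$ we have $E\xi=0$, so $Q\xi=E(\xi|\mathcal B)$ and therefore $\|E(\xi|\mathcal B)\|_p=\|Q\xi\|_p\leqslant(1-\psi)\|\xi\|_p$, which is \eqref{eq:marginal ineq}. The only genuinely substantive step is the first one --- converting the infimum defining $\psi'$ into the a.e.\ lower bound $E(\xi|\mathcal B)\geqslant\psi E\xi$; once this positivity is in hand, the rest is the standard fact that a normalized positive operator fixing constants and preserving the integral is an $L^p$-contraction (equivalently, one may interpolate the elementary bounds $\|Q\xi\|_1\leqslant(1-\psi)\|\xi\|_1$ and $\|Q\xi\|_\infty\leqslant(1-\psi)\|\xi\|_\infty$ by Riesz--Thorin).
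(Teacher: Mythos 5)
Your proof is correct, but it takes a genuinely different route from the paper's. The paper lifts everything to the product space $\Omega\times\Omega$: it pushes $\mu$ forward under the diagonal map $\omega\mapsto(\omega,\omega)$ to obtain a measure $\nu$ on $\mathcal{A}\otimes\mathcal{B}$, observes that the defining inequality of $\psi'$ says precisely that $\nu-c(\mu_1\otimes\mu_2)$ is a nonnegative measure (here $c=\psi'(\mathcal{A},\mathcal{B})$ and $\mu_1,\mu_2$ are the restrictions of $\mu$ to $\mathcal{A},\mathcal{B}$), and then applies H\"older's inequality with respect to this measure to $\xi(x)\eta(y)$, using $E\xi=0$ and Fubini to discard the term $c\int\xi(x)\eta(y)\,d(\mu_1\otimes\mu_2)=0$. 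This yields the bilinear covariance estimate $|E(\xi\eta)|\leqslant(1-c)\|\xi\|_p\|\eta\|_q$ for centered $\mathcal{A}$-measurable $\xi\in L^p$ and $\mathcal{B}$-measurable $\eta\in L^q$, and the lemma follows by $L^p$--$L^q$ duality over $(\Omega,\mathcal{B},\mu)$. You instead stay on $\Omega$ and encode the same underlying positivity on the operator side: the inequality $\mu(A\cap B)\geqslant\psi\mu(A)\mu(B)$ becomes the a.e.\ bound $E(\xi|\mathcal{B})\geqslant\psi E\xi$ for $\xi\geqslant0$, i.e.\ positivity of $Q=E(\cdot\,|\mathcal{B})-\psi E(\cdot)$; since $(1-\psi)^{-1}Q$ is then a Markov operator (positive, unital, integral-preserving), its $L^p$-contractivity --- via operator Jensen or $L^1$/$L^\infty$ interpolation --- gives $\|Q\|_{L^p\to L^p}\leqslant 1-\psi$, which is the lemma because $Q\xi=E(\xi|\mathcal{B})$ on mean-zero $\xi$. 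The two arguments rest on the same fact (positivity of the kernel $\nu-c\,\mu_1\otimes\mu_2$ is exactly positivity of $Q$), but the mechanics differ: the paper's route produces the symmetric covariance inequality as a byproduct of independent interest and handles all $p$ including $p=\infty$ uniformly through duality, while yours avoids the product-space construction and Fubini entirely and reduces the lemma to a standard structural fact about Markov operators. Your handling of the edge case $\psi=1$ (which forces independence) and of complex scalars in the Jensen step (affine minorants on $\mathbb{R}^2$) is also sound.
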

\begin{remark}
For the case $(\Omega,\mathcal{F})=(\Omega_1,\Sigma_1)\otimes(\Omega_2,\Sigma_2)$, $\mathcal{A}=\Sigma_1\times\Omega_2$,
$\mathcal{B}=\Omega_1\times\Sigma_2$ inequality~\eqref{eq:marginal ineq}

(1) for $p=2$ follows from \cite[p.1332, Proof of Lemma 1]{Bickel91};

(2) for $p=\infty$ is proved in \cite[p.1331]{Bickel91}.
\end{remark}
\begin{proof}
Set $c=\psi'(\mathcal{A},\mathcal{B})$.

\textbf{1.}
Consider the probability space $(\Omega,\mathcal{F},\mu)$, the measurable space $(\Omega\times\Omega,\mathcal{A}\otimes\mathcal{B})$ and
a mapping $T:\Omega\to\Omega\times\Omega$ defined by $T\omega=(\omega,\omega), \omega\in\Omega$.
Since $T^{-1}(A\times B)=A\cap B$ for $A\in\mathcal{A}, B\in\mathcal{B}$, we see that $T$ is measurable.
Thus we can define the pushforward measure $\nu=T_{\ast}\mu$ on $\mathcal{A}\otimes\mathcal{B}$.
The measure $\nu$ has the following properties.

Firstly, we have the change-in-variables formula: if a function $f:\Omega\times\Omega\to\mathbb{K}$ is $\mathcal{A}\otimes\mathcal{B}$ measurable, then
$$
\int_{\Omega\times\Omega}f(x,y)d\nu(x,y)=\int_{\Omega}f(\omega,\omega)d\mu(\omega)
$$
(the equality means that the first integral exists if and only if the second exists, and if they exist, then they are equal).

To formulate the second property of $\nu$ denote by $\mu_1$ the restriction of $\mu$ to $\mathcal{A}$ and $\mu_2$ the restriction of $\mu$ to $\mathcal{B}$.
Then
$$
\nu(A\times B)=\mu(A\cap B)\geqslant c\mu(A)\mu(B)=c\mu_1(A)\mu_2(B)=c(\mu_1\otimes\mu_2)(A\times B)
$$
for $A\in\mathcal{A}$, $B\in\mathcal{B}$.
It follows that $\nu\geqslant c(\mu_1\otimes\mu_2)$.
Hence $\nu-c(\mu_1\otimes\mu_2)$ is a measure on $\mathcal{A}\otimes\mathcal{B}$.

\textbf{2.}
In what follows we will often use the following simple facts.
If a random variable $\xi$ is $\mathcal{A}$-measurable, then $\int_\Omega \xi d\mu=\int_{\Omega}\xi d\mu_1$
(the equality means that the first integral exists if and only if the second exists, and if they exist, then they are equal).
It follows that $\|\xi\|_{L^p(\Omega,\mathcal{F},\mu)}=\|\xi\|_{L^p(\Omega,\mathcal{A},\mu_1)}$ for $p\in[1,\infty)$.
Also, one can easily check that $\|\xi\|_{L^\infty(\Omega,\mathcal{F},\mu)}=\|\xi\|_{L^\infty(\Omega,\mathcal{A},\mu_1)}$.

Of course, similar facts are valid for the $\sigma$-algebra $\mathcal{B}$.

\textbf{3.}
Let $q\in[1,\infty)\cup\{\infty\}$ be such that $1/p+1/q=1$.
We will prove that if a random variable $\xi$ is $\mathcal{A}$-measurable and belongs to $L^p_0(\mathcal{F})$
and a random variable $\eta$ is $\mathcal{B}$-measurable and belongs to $L^q(\mathcal{F})$, then
\begin{equation}\label{eq:cov ineq}
|E(\xi\eta)|\leqslant (1-c)\|\xi\|_p\|\eta\|_q.
\end{equation}
First assume that $p\in(1,\infty)$.
Then $q\in(1,\infty)$.
Since $E\xi=0$, by the Fubini theorem we have $\int_{\Omega\times\Omega}\xi(x)\eta(y) d(\mu_1\otimes\mu_2)=0$.
Therefore
\begin{align*}
|E(\xi\eta)|&=|\int_{\Omega}\xi(\omega)\eta(\omega) d\mu|=|\int_{\Omega\times\Omega}\xi(x)\eta(y) d\nu|=
|\int_{\Omega\times\Omega}\xi(x)\eta(y) d(\nu-c(\mu_1\otimes\mu_2))|\leqslant\\
&\leqslant
\left(\int_{\Omega\times\Omega}|\xi(x)|^p d(\nu-c(\mu_1\otimes\mu_2))\right)^{1/p}
\left(\int_{\Omega\times\Omega}|\eta(y)|^q d(\nu-c(\mu_1\otimes\mu_2))\right)^{1/q}.
\end{align*}
For the first integral we have
\begin{align*}
&\int_{\Omega\times\Omega}|\xi(x)|^p d(\nu-c(\mu_1\otimes\mu_2))=
\int_{\Omega\times\Omega}|\xi(x)|^p d\nu-c\int_{\Omega\times\Omega}|\xi(x)|^p d(\mu_1\otimes\mu_2)=\\
&=\int_{\Omega}|\xi(\omega)|^p d\mu-c\int_{\Omega}|\xi(x)|^p d\mu_1=(1-c)\int_{\Omega}|\xi(\omega)|^p d\mu.
\end{align*}
Similarly, for the second integral we have
$$
\int_{\Omega\times\Omega}|\eta(y)|^q d(\nu-c(\mu_1\otimes\mu_2))=(1-c)\int_{\Omega}|\eta(\omega)|^q d\mu.
$$
It follows that
$$
|E(\xi\eta)|\leqslant (1-c)^{1/p}\|\xi\|_p (1-c)^{1/q}\|\eta\|_q=(1-c)\|\xi\|_p\|\eta\|_q.
$$
Let us prove inequality~\eqref{eq:cov ineq} for $p=1$ and $q=\infty$.
We have
\begin{align*}
|E(\xi\eta)|&=|\int_{\Omega}\xi(\omega)\eta(\omega) d\mu|=|\int_{\Omega\times\Omega}\xi(x)\eta(y) d\nu|=
|\int_{\Omega\times\Omega}\xi(x)\eta(y) d(\nu-c(\mu_1\otimes\mu_2))|\leqslant\\
&\leqslant
\int_{\Omega\times\Omega}|\xi(x)||\eta(y)| d(\nu-c(\mu_1\otimes\mu_2)).
\end{align*}
One can easily check that $|\eta(y)|\leqslant\|\eta\|_\infty$ for $(\nu-c(\mu_1\otimes\mu_2))$-almost every pair $(x,y)$.
Then
$$
|E(\xi\eta)|\leqslant\|\eta\|_\infty\int_{\Omega\times\Omega}|\xi(x)| d(\nu-c(\mu_1\otimes\mu_2))=
(1-c)\|\xi\|_1\|\eta\|_\infty.
$$
For $p=\infty$ and $q=1$ the proof of~\eqref{eq:cov ineq} is similar.

\textbf{4.}
Now we are ready to prove~\eqref{eq:marginal ineq}.
Let $\xi$ be an $\mathcal{A}$-measurable random variable which belongs to $L^p_0(\mathcal{F})$.
For each $\mathcal{B}$-measurable random variable $\eta$ which belongs to $L^q(\mathcal{F})$ we have
$$
|E(E(\xi|\mathcal{B})\eta)|=|E(\xi\eta)|\leqslant (1-c)\|\xi\|_p\|\eta\|_q.
$$
It follows that $\|E(\xi|\mathcal{B})\|_p\leqslant (1-c)\|\xi\|_p$.
\end{proof}

\section{Sums of tensor powers of subspaces}\label{s:tensor}

\subsection{}
Let $X$ be a Banach space and $X_1,...,X_n$ be complemented subspaces of $X$.
For a natural number $m$ define $X^{\otimes m}:=X\otimes...\otimes X$ to be the tensor product of $m$ copies of $X$.
Note that $X^{\otimes m}$ is merely a vector space.
Set $X_i^{\otimes m}=X_i\otimes...\otimes X_i$, $i=1,...,n$.
Clearly, $X_i^{\otimes m}$ is a subspace of $X^{\otimes m}$.
Suppose $\alpha$ is a norm on $X^{\otimes m}$.
Denote by $(X^{\otimes m})_\alpha$ the vector space $X^{\otimes m}$ endowed with the norm $\alpha$.
Let $X^m=X^m(\alpha)$ be the completion of the space and $X_i^m$ be the closure of $X_i^{\otimes m}$ in $X^m$.

We are interested in conditions under which the sum of subspaces $X_1^m,...,X_n^m$ is complemented in $X^m$.

\subsection{Results}
Consider the following two properties for the norm $\alpha$:

(P1)
if $A_1:X\to X,...,A_m:X\to X$ are bounded linear operators, then the operator
$A_1\otimes...\otimes A_m:(X^{\otimes m})_\alpha\to(X^{\otimes m})_\alpha$ is bounded.

(P2)
if $A_1:X\to X,...,A_m:X\to X$ are bounded linear operators, then the operator
$A_1\otimes...\otimes A_m:(X^{\otimes m})_\alpha\to(X^{\otimes m})_\alpha$ is bounded and its norm is equal to $\|A_1\|\|A_2\|...\|A_m\|$.

Note that the most important tensor product norms, i.e., the Hilbert space tensor product norm (when $X$ is a Hilbert space),
the projective and injective norms have the property (P2).

\begin{theorem}\label{Th:Tensor}
Assume the norm $\alpha$ has the property (P1) and $m\geqslant n-1$.
If $X_i\cap X_j=\{0\}$ and $X_i+X_j$ is complemented in $X$ for each pair of distinct indices $i,j$, then
the subspaces $X_1^m,...,X_n^m$ are linearly independent and their sum is complemented in $X^m$.
\end{theorem}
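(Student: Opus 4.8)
The plan is to manufacture, for each $i$, a bounded projection $P_i^m$ of $X^m$ onto $X_i^m$ that annihilates $X_j^m$ for every $j\neq i$, and then to invoke Theorem~\ref{Th:Banach} with the zero matrix $E$, for which trivially $r(E)=0<1$. The pairwise hypotheses on $X$ and the condition $m\geqslant n-1$ are used precisely to build these projections.

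First I would upgrade the pairwise hypotheses into good projections on $X$ itself. Fix distinct indices $i,j$. Since $X_i+X_j$ is complemented, choose a projection $Q_{ij}$ of $X$ onto $X_i+X_j$. As $X_i$ and $X_j$ are complemented (hence closed), $X_i\cap X_j=\{0\}$, and $X_i+X_j$ is closed (being complemented), the continuous bijection $X_i\times X_j\to X_i+X_j$, $(a,b)\mapsto a+b$, is an isomorphism of Banach spaces by the open mapping theorem; composing its first coordinate with $Q_{ij}$ yields a projection $P_{ij}$ of $X$ onto $X_i$ with $P_{ij}|_{X_j}=0$. I also fix, for each $i$, an arbitrary projection $P_i$ of $X$ onto $X_i$ (which exists since $X_i$ is complemented).

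The combinatorial heart of the argument now uses $m\geqslant n-1$. For each $i$ the index set $\{j:j\neq i\}$ has $n-1$ elements, so I can choose an injection $\sigma_i$ of it into the set of tensor slots $\{1,\dots,m\}$. Define $B^{(i)}_k=P_{ij}$ if $k=\sigma_i(j)$ for some $j\neq i$, and $B^{(i)}_k=P_i$ otherwise; every $B^{(i)}_k$ is a projection of $X$ onto $X_i$. Set
$$\Pi_i=B^{(i)}_1\otimes\cdots\otimes B^{(i)}_m.$$
Then $\Pi_i^2=\Pi_i$ and $Ran(\Pi_i)=X_i\otimes\cdots\otimes X_i=X_i^{\otimes m}$, so $\Pi_i$ is an algebraic projection of $X^{\otimes m}$ onto $X_i^{\otimes m}$. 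By property (P1) it is $\alpha$-bounded, hence extends to a bounded projection $P_i^m$ of $X^m$; since $\Pi_i$ already maps the dense subspace $X^{\otimes m}$ into the closed subspace $X_i^m$, continuity gives $Ran(P_i^m)=X_i^m$. The vanishing property is then immediate: for an elementary tensor $x_1\otimes\cdots\otimes x_m$ with all $x_k\in X_j$ (where $j\neq i$), the factor in slot $\sigma_i(j)$ is $P_{ij}x_{\sigma_i(j)}=0$, so $\Pi_i$ kills the tensor; hence $\Pi_i$ vanishes on $X_j^{\otimes m}$, and by continuity $P_i^m$ vanishes on its closure $X_j^m$.

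Thus the complemented subspaces $X_1^m,\dots,X_n^m$ carry projections $P_1^m,\dots,P_n^m$ with $P_i^m|_{X_j^m}=0$ for all $i\neq j$; taking $\varepsilon_{ij}=0$ gives $E=0$ and $r(E)=0<1$, so Theorem~\ref{Th:Banach} (equivalently, observation (1) of Subsection~\ref{ss:B trivial observations}) delivers both the linear independence of $X_1^m,\dots,X_n^m$ and the complementedness of their sum. I expect the only genuinely delicate points to be the construction of the annihilating projections $P_{ij}$ (justifying that $X_i\oplus X_j$ is a \emph{topological} direct sum inside $X$) and the verification that the extension $P_i^m$ has range exactly $X_i^m$ rather than something larger; both are routine once (P1) is available, and the remainder of the argument is purely formal.
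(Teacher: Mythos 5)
Your proof is correct and follows essentially the same route as the paper: the paper likewise fills $n-1$ tensor slots with projections $P_{ij}$ onto $X_i$ annihilating $X_j$ (whose existence it delegates to a lemma, which your open-mapping-theorem argument supplies), fills the remaining slots with an arbitrary projection onto $X_i$, extends by (P1), and concludes via observation (1) of Subsection~\ref{ss:B trivial observations} --- i.e.\ exactly your $E=0$ case of Theorem~\ref{Th:Banach}.
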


By using Theorem~\ref{Th:Banach} one can get sufficient conditions for the subspaces $X_1^m,...,X_n^m$
to be linearly independent and their sum to be complemented in $X^m$ for a given $m\geqslant 1$ (which can be smaller than $n-1$).
Let us present such conditions for the case when $X$ is a Hilbert space (but $\alpha$ is not necessarily the Hilbert space tensor product norm).

So let $X$ be a Hilbert space and $X_1,...,X_n$ be closed subspaces of $X$.
Recall that for two closed subspaces $Y,Z$ of $X$ the cosine of the minimal angle between $Y$ and $Z$, $c_0(Y,Z)$, is defined by
$$
c_0(Y,Z)=\sup\{|\langle y,z\rangle|\mid y\in Y, \|y\|\leqslant 1, z\in Z, \|z\|\leqslant 1\},
$$
here $\langle\cdot,\cdot\rangle$ is the inner product in $X$ (see, e.g., \cite{Deutsch95}).
Define the $n\times n$ matrix $E^{(m)}=(e^{(m)}_{ij})$ by
$$e^{(m)}_{ij}=
\begin{cases}
0, &\text{if $i=j$;}\\
(c_0(X_i,X_j))^m, &\text{if $i\neq j$.}
\end{cases}
$$
It is clear that $E$ is symmetric and nonnegative.
It follows that $r(E)$, the spectral radius of $E$, is the maximum eigenvalue of $E$.

\begin{theorem}\label{Th:Tensor E}
Assume the norm $\alpha$ has the property (P2).
If $r(E^{(m)})<1$, then the subspaces $X_1^m,...,X_n^m$ are linearly independent and their sum is complemented in $X^m$.
\end{theorem}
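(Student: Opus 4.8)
The plan is to deduce Theorem~\ref{Th:Tensor E} directly from Theorem~\ref{Th:Banach} by exhibiting projections $P_1,\dots,P_n$ onto $X_1^m,\dots,X_n^m$ whose mutual restriction norms are controlled exactly by the entries of $E^{(m)}$. First I would fix the natural candidates. Since $X$ is a Hilbert space, let $Q_i$ be the orthogonal projection of $X$ onto $X_i$, so that $\|Q_i\|=1$ and $Q_i^2=Q_i$. By property (P2) the operator $Q_i^{\otimes m}=Q_i\otimes\dots\otimes Q_i$ extends to a bounded operator on $X^m$; it is idempotent because $(Q_i^{\otimes m})^2=(Q_i^2)^{\otimes m}=Q_i^{\otimes m}$, and its range is $X_i^m$. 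Thus $P_i:=Q_i^{\otimes m}$ is a projection onto $X_i^m$ with $\|P_i\|=\|Q_i\|^m=1$.

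The heart of the argument is the estimate of $\|P_i|_{X_j^m}\|$ for $i\neq j$. I would first record the one-level fact that $\|Q_iz\|\le c_0(X_i,X_j)\|z\|$ for every $z\in X_j$. This is immediate from the definition of $c_0$: for such $z$ the vector $Q_iz$ lies in $X_i$, so
$$\|Q_iz\|=\sup\{|\langle Q_iz,y\rangle|\mid y\in X_i,\ \|y\|\le1\}=\sup\{|\langle z,y\rangle|\mid y\in X_i,\ \|y\|\le1\}\le c_0(X_i,X_j)\|z\|.$$
Since $Q_jv\in X_j$ for every $v\in X$, this yields $\|Q_iQ_j\|\le c_0(X_i,X_j)$. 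I would then lift this to the tensor level. On elementary tensors one has $Q_i^{\otimes m}Q_j^{\otimes m}=(Q_iQ_j)^{\otimes m}$, and since (P1) makes both sides bounded, the identity persists on all of $X^m$ by continuity. For $x\in X_j^m$ we have $Q_j^{\otimes m}x=x$, because $Q_j^{\otimes m}$ is the identity on the dense subspace $X_j^{\otimes m}$ and hence on its closure; therefore $P_ix=Q_i^{\otimes m}x=(Q_iQ_j)^{\otimes m}x$. Applying (P2) gives $\|(Q_iQ_j)^{\otimes m}\|=\|Q_iQ_j\|^m\le (c_0(X_i,X_j))^m$, and consequently
$$\|P_ix\|\le (c_0(X_i,X_j))^m\,\|x\|=e^{(m)}_{ij}\|x\|,\qquad x\in X_j^m.$$

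With this inequality the conclusion is short. Taking $\varepsilon_{ij}:=e^{(m)}_{ij}$, condition \eqref{eq:varepsilon_ij} holds for $P_1,\dots,P_n$, and the matrix associated to the $X_i^m$ is exactly $E^{(m)}$. Since $r(E^{(m)})<1$ by hypothesis, Theorem~\ref{Th:Banach} applies and shows that $X_1^m+\dots+X_n^m$ is complemented in $X^m$. Linear independence of the subspaces is already contained in that proof: the operator $G=JS$ constructed there is invertible, so $S$ is injective, which is precisely the statement that $x_1+\dots+x_n=0$ with $x_i\in X_i^m$ forces every $x_i=0$.

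I expect the main obstacle to be the key estimate, namely correctly reducing $P_i|_{X_j^m}$ to the single operator $(Q_iQ_j)^{\otimes m}$ and justifying the two continuity steps it relies on: the identity $Q_i^{\otimes m}Q_j^{\otimes m}=(Q_iQ_j)^{\otimes m}$ on the completion, and the fact that $Q_j^{\otimes m}$ restricts to the identity on $X_j^m$. Both hinge on properties (P1)/(P2) to guarantee that the relevant tensor-power operators are bounded and thus determined by their action on the dense subspace $X^{\otimes m}$; once this is in place, the passage from the scalar bound $\|Q_iQ_j\|\le c_0(X_i,X_j)$ to the $m$-th power bound is exactly where (P2) (as opposed to mere (P1)) is indispensable.
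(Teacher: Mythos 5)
Your proposal is correct and follows essentially the same route as the paper: take tensor powers of the orthogonal projections onto the $X_i$, use property (P2) to get $\|Q_i^{\otimes m}|_{X_j^m}\|\leqslant\|Q_iQ_j\|^m=(c_0(X_i,X_j))^m$, and then invoke Theorem~\ref{Th:Banach} with $\varepsilon_{ij}=e^{(m)}_{ij}$. The only differences are cosmetic: you prove the bound $\|Q_iQ_j\|\leqslant c_0(X_i,X_j)$ directly rather than citing the equality $c_0(Y,Z)=\|P_YP_Z\|$ from \cite{Deutsch95}, and you spell out the density/continuity steps (and the injectivity of $S$ for linear independence) that the paper leaves implicit.
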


One can get a similar result in the general Banach space setting.
The result will be presented elsewhere.

\subsection{Proof of Theorem~\ref{Th:Tensor}}

If $A_1:X\to X,...,A_m:X\to X$ are bounded linear operators, then the operator
$A_1\otimes...\otimes A_m:(X^{\otimes m})_\alpha\to(X^{\otimes m})_\alpha$ is bounded.
Thus the operator can be extended, by continuity, to the bounded operator from $X^m$ to $X^m$.
We denote this extension by $(A_1\otimes...\otimes A_m)_\alpha$.

For every pair of indices $i<j$ we know that $X_i\cap X_j=\{0\}$ and $X_i+X_j$ is complemented in $X$.
Then there exist (bounded) projections $P_{ij}$ onto $X_i$ and $P_{ji}$ onto $X_j$ such that $P_{ij}|_{X_j}=0$ and $P_{ji}|_{X_i}=0$
(see Remark~\ref{r:converse}).

To prove Theorem~\ref{Th:Tensor} we will use observation (1) in Subsection~\ref{ss:B trivial observations}.
For $i=1,...,n$ define an operator $Q_i:X^m\to X^m$ by
$$
Q_i=(P_{i1}\otimes...\otimes P_{i,i-1}\otimes P_{i,i+1}\otimes...\otimes P_{i,n}\otimes P_i\otimes...\otimes P_i)_\alpha,
$$
where $P_i$ is arbitrary (bounded) projection onto $X_i$.
It is easily seen that $Q_i$ is a projection onto $X_i^m$, $i=1,...,n$.
Moreover, since $P_{ij}|_{X_j}=0$ for $i\neq j$, we conclude that $Q_i|_{X_j^m}=0$ for $i\neq j$.
Now Theorem~\ref{Th:Tensor} follows from observation (1) in Subsection~\ref{ss:B trivial observations}.

\subsection{Proof of Theorem~\ref{Th:Tensor E}}

First recall that for two closed subspaces $Y,Z$ of $X$ we have $c_0(Y,Z)=\|P_Y P_Z\|$, where
$P_Y$ is the orthogonal projection onto $Y$ and $P_Z$ is the orthogonal projection onto $Z$ (see, e.g., \cite[Lemma 10]{Deutsch95}).

To prove Theorem~\ref{Th:Tensor E} we will use Theorem~\ref{Th:Banach}.
Let $P_i$ be the orthogonal projection onto $X_i$, $i=1,...,n$.
For $i=1,...,n$ define an operator $Q_i:X^m\to X^m$ by
$$
Q_i=(P_i\otimes P_i\otimes...\otimes P_i)_\alpha.
$$
It is easily seen that $Q_i$ is a projection onto $X_i^m$.
For every pair of distinct indices $i,j$ we have
$$
\|Q_i Q_j\|=\|(P_i\otimes...\otimes P_i)_\alpha (P_j\otimes...\otimes P_j)_\alpha\|=
\|(P_i P_j\otimes...\otimes P_i P_j)_\alpha\|=\|P_i P_j\|^m=(c_0(X_i,X_j))^m.
$$
Thus, if $u\in X_j^m$, then
$$
\|Q_i u\|=\|Q_i Q_j u\|\leqslant\|Q_i Q_j\|\|u\|=(c_0(X_i,X_j))^m\|u\|.
$$
Now Theorem~\ref{Th:Tensor E} follows from Theorem~\ref{Th:Banach}.

\section{Stability of the complementability property of the sum of linearly independent subspaces}\label{s:stability}

\subsection{}\label{ss:assertion stability}
Let $X_1,...,X_n$ be closed nonzero subspaces of a Banach space $X$.
Assume that the subspaces are linearly independent and their sum, $X_1+...+X_n$, is complemented in $X$.
We will show that if closed nonzero subspaces $X_1',...,X_n'$ are such that
$X_i'$ and $X_i$ are sufficiently ``close'' to each other for all $i=1,...,n$, then
$X_1',...,X_n'$ are also linearly independent, their sum is complemented in $X$ and, moreover, the subspaces $X_1+...+X_n$ and $X_1'+...+X_n'$
have a common complementary subspace in $X$.
We will also get quantitative versions of the assertion.

\subsection{}
To specify the meaning of the fuzzy words ``$X_i'$ and $X_i$ are sufficiently close to each other'' we recall a few standard measures of closeness
of two closed subspaces of a Banach space.
The measures are the geometric opening, ball opening, spherical opening and operator opening (see, e.g., \cite{Ostrovskii94}).
We will use the first three openings.
Let us recall their definitions.
Let $X$ be a Banach space.
For an element $x\in X$ and a subset $M$ of $X$ we denote by $\text{dist}(x,M)$ the distance from $x$ to $M$, i.e., $\inf\{\|x-y\|\mid y\in M\}$.
Let $Y$ and $Z$ be two closed nonzero subspaces of $X$.
The geometric opening from $Y$ to $Z$, $\Theta_0(Y,Z)$, is defined by
$$
\Theta_0(Y,Z)=\sup\{\text{dist}(y,Z)\mid y\in S_Y\},
$$
where $S_Y$ is the unit sphere of $Y$, i.e., $\{y\in Y\mid \|y\|=1\}$.
The ball opening from $Y$ to $Z$, $\Lambda_0(Y,Z)$, is defined by
$$
\Lambda_0(Y,Z)=\sup\{\text{dist}(y,B_Z)\mid y\in S_Y\},
$$
where $B_Z$ is the closed unit ball of $Z$, i.e., $\{z\in Z\mid \|z\|\leqslant 1\}$.
The spherical opening from $Y$ to $Z$, $\Omega_0(Y,Z)$, is defined by
$$
\Omega_0(Y,Z)=\sup\{\text{dist}(y,S_Z)\mid y\in S_Y\}.
$$
It is clear that $\Theta_0(Y,Z)\leqslant\Lambda_0(Y,Z)\leqslant\Omega_0(Y,Z)$.
Now we can say that $Y$ is close to $Z$ if the number $\Theta_0(Y,Z)$ ($\Lambda_0(Y,Z)$, $\Omega_0(Y,Z)$) is small.

\subsection{}
In this Subsection we present the scheme of the proof of the assertion from Subsection~\ref{ss:assertion stability}.
Denote by $V$ a complementary subspace for $X_1+...+X_n$ in $X$.
Then the subspaces $X_1,...,X_n,V$ are linearly independent and $X_1+...+X_n+V=X$.
Let $i\in\{1,...,n\}$.
Then the subspace $\sum_{j\neq i}X_j+V$ is closed and the subspaces $X_i$ and $\sum_{j\neq i}X_j+V$ are complementary to each other in $X$.
If the subspaces $X_i'$ and $X_i$ are sufficiently close to each other, then the subspaces
$X_i'$ and $\sum_{j\neq i}X_j+V$ will also be complementary to each other in $X$.
Denote by $P_i'$ the projection onto $X_i'$ along $\sum_{j\neq i}X_j+V$.
Now we are going to use Theorem~\ref{Th:Banach} for the subspaces $X_1',...,X_n'$ and the projections $P_1',...,P_n'$.
To do this we have to estimate $\|P_i'|_{X_j'}\|$, $i\neq j$.
Let $x\in X_j'$.
Since $P_i'|_{X_j}=0$, for arbitrary $y\in X_j$ we have $\|P_i'x\|=\|P_i'(x-y)\|\leqslant \|P_i'\|\|x-y\|$.
It follows that
$$
\|P_i'x\|\leqslant\|P_i'\|\text{dist}(x,X_j)\leqslant\|P_i'\|\Theta_0(X_j',X_j)\|x\|.
$$
Hence, we can set $\varepsilon_{ij}=\|P_i'\|\Theta_0(X_j',X_j)$, $i\neq j$, and define the $n\times n$ matrix $E=(e_{ij})$ by
$$e_{ij}=
\begin{cases}
0, &\text{if $i=j$;}\\
\varepsilon_{ij}, &\text{if $i\neq j$.}
\end{cases}
$$
If $X_i'$ and $X_i$ are sufficiently close to each other for all $i=1,...,n$, then $r(E)<1$.
Thus we can use Theorem~\ref{Th:Banach}.
The theorem implies that the subspaces $X_1',...,X_n'$ are linearly independent, their sum is complemented in $X$ and, moreover, the subspace
$$
\bigcap_{i=1}^n \ker(P_i')=\bigcap_{i=1}^n \left(\sum_{j\neq i}X_j+V\right)=V
$$
is a complement of $X_1'+...+X_n'$ in $X$.

A complete proof and quantitative versions of the assertion from Subsection~\ref{ss:assertion stability} will be given in
Subsection~\ref{ss:stability complete proof}.
For this we will need a few lemmas.
Their proofs will be given in Subsections~\ref{ss:proof l closed}, \ref{ss:proof l complementary}, \ref{ss:proof l estimates} and
\ref{ss:proof l spectral radius}.

\subsection{Auxiliary Lemmas}
\begin{lemma}\label{l:closed}
Let $X$ be a Banach space and $V_1,...,V_n$ be closed subspaces of $X$.
If $V_1,...,V_n$ are linearly independent and $V_1+...+V_n$ is closed, then $V_1+...+V_{n-1}$ is also closed.
\end{lemma}

To formulate the next two lemmas we need the notion of inclination of a closed subspace to another closed subspace of a Banach space.
Let $X$ be a Banach space, $Y,Z$ be closed nonzero subspaces of $X$.
The inclination of $Y$ to $Z$, $\delta(Y,Z)$, is defined by
$$
\delta(Y,Z)=\inf\{\text{dist}(y,Z)\mid y\in S_Y\}.
$$
It is well known, and one can easily check, that $\delta(Y,Z)>0$ if and only if $Y\cap Z=\{0\}$ and $Y+Z$ is closed.
Moreover, if $Y\cap Z=\{0\}$ and $Y+Z$ is closed, then $\delta(Y,Z)=1/\|P\|$, where $P:Y+Z\to Y+Z$ is the projection onto $Y$ along $Z$.

\begin{lemma}\label{l:complementary}
Let $X$ be a Banach space, $Y,Z$ be two closed nonzero subspaces of $X$ which are complementary to each other in $X$.
Let $Y'$ be a closed nonzero subspace of $X$.
If $\Theta_0(Y',Y)<\delta(Z,Y)$ and $\Theta_0(Y,Y')<\delta(Y,Z)$, then the subspaces $Y'$ and $Z$ are also complementary to each other in $X$.
\end{lemma}

\begin{lemma}\label{l:estimates}
Let $X$ be a Banach space, $Y,Y',Z$ be closed nonzero subspaces of $X$.
Assume that $Y$ and $Z$ are complementary to each other in $X$, $Y'$ and $Z$ are also complementary to each other in $X$.
Denote by $P'$ the projection onto $Y'$ along $Z$.
Then the following estimates for $\|P'\|$ are valid:
\begin{enumerate}
\item
if $\Theta_0(Y,Y')<\delta(Y,Z)$, then
$$
\|P'\|\leqslant\frac{1+\Theta_0(Y,Y')}{\delta(Y,Z)-\Theta_0(Y,Y')}.
$$
\item
if $\Lambda_0(Y,Y')<\delta(Y,Z)$, then
$$
\|P'\|\leqslant\frac{1}{\delta(Y,Z)-\Lambda_0(Y,Y')}.
$$
\item
if $\Theta_0(Y',Y)<\delta(Y,Z)/(\delta(Y,Z)+1)$, then
$$
\|P'\|\leqslant\frac{1}{\delta(Y,Z)-(\delta(Y,Z)+1)\Theta_0(Y',Y)}.
$$
\item
if $\Omega_0(Y',Y)<\delta(Y,Z)$, then
$$
\|P'\|\leqslant\frac{1}{\delta(Y,Z)-\Omega_0(Y',Y)}.
$$
\end{enumerate}
\end{lemma}

Lemma~\ref{l:complementary} is a quantitative version of the fact that if subspaces $Y$ and $Z$ are complementary to each other in $X$
and subspaces $Y'$ and $Y$ are sufficiently close to each other, then $Y'$ and $Z$ are also complementary to each other in $X$.
Results of this type are obtained in \cite[Theorem 2]{Gokhberg59}, \cite[Theorem 5.2]{Berkson63} and \cite[Theorem 3.1(b)]{DRW05}.
One can prove Lemma~\ref{l:complementary} by specifying the arguments in \cite[Proof of Theorem 5.2]{Berkson63}.
We will do this in Subsection~\ref{ss:proof l complementary}.
Also, Lemma~\ref{l:complementary} follows from \cite[Theorem 3.1(b) and Lemma 2.3]{DRW05}
(note that in the paper the authors use the letter $\delta$ instead of $\Theta_0$).

Denote by $P$ the projection onto $Y$ along $Z$.
In \cite[Theorem 5.2]{Berkson63} and \cite[Theorem 3.1(a)]{DRW05} estimates for $\|P'-P\|$ are obtained.
Clearly, $\|P'\|\leqslant\|P'-P\|+\|P\|$.
Using this inequality, one can get the estimate for $\|P'\|$ in Lemma~\ref{l:estimates}(1) by specifying inequality (5.4) in~\cite{Berkson63}
(one can use $\Theta_0(Y,Y')$ instead of $\theta(Y,Y')$) or by \cite[inequalities (3.2) and (2.12)]{DRW05}.
Nevertheless, it is more natural to get estimates for $\|P'\|$ directly.
We will do this in the proof of Lemma~\ref{l:estimates} in Subsection~\ref{ss:proof l estimates}.

The proofs of Lemma~\ref{l:complementary} and the estimates for $\|P'\|$ in Lemma~\ref{l:estimates}(1),(2)
are very heavily based on the arguments of Berkson~\cite[Proof of Theorem 5.2]{Berkson63}.
Items (3) and (4) of Lemma~\ref{l:estimates} are simple.

\begin{lemma}\label{l:spectral radius}
Let $E=(e_{ij})$ be an $n\times n$ matrix with
$$e_{ij}=
\begin{cases}
0, &\text{if $i=j$;}\\
a_i b_j, &\text{if $i\neq j$,}
\end{cases}
$$
where $a_i>0$, $i=1,...,n$ and $b_j\geqslant 0$, $j=1,...,n$.
Then $r(E)<1$ if and only if
\begin{equation}\label{eq:sum<1}
\sum_{i=1}^n\frac{a_i b_i}{a_i b_i+1}<1.
\end{equation}
\end{lemma}

\subsection{}\label{ss:stability complete proof}
Let $X_1,...,X_n$ be closed nonzero subspaces of a Banach space $X$.
Assume that the subspaces are linearly independent and their sum, $X_1+...+X_n$, is complemented in $X$.
We will show that if closed nonzero subspaces $X_1',...,X_n'$ are such that
$X_i'$ and $X_i$ are sufficiently close to each other for all $i=1,...,n$, then
$X_1',...,X_n'$ are also linearly independent, their sum is complemented in $X$ and, moreover, the subspaces $X_1+...+X_n$ and $X_1'+...+X_n'$
have a common complementary subspace in $X$.
By using Theorem~\ref{Th:Banach} we will get various quantitative versions of the assertion.
To get the quantitative results we have to introduce quantities which characterize linear independence of $X_1,...,X_n$ and
complementability of $X_1+...+X_n$ in $X$.
We assume that

(1) there exists a projection onto $X_1+...+X_n$ of norm at most $C$, where $C$ is a positive number;

(2) the inclination $\delta(X_i,\sum_{j\neq i}X_j)\geqslant\delta_i$, $i=1,...,n$, where $\delta_1,...,\delta_n$ are positive numbers.
(Note that $\delta(X_i,\sum_{j\neq i}X_j)>0$ for $i=1,...,n$.
Indeed the closed subspaces $X_1,...,X_n$ are linearly independent and their sum is closed.
By Lemma~\ref{l:closed} the subspace $\sum_{j\neq i}X_j$ is closed.
Thus $X_i$ and $\sum_{j\neq i}X_j$ are closed subspaces of $X$ with trivial intersection and closed sum.
It follows that $\delta(X_i,\sum_{j\neq i}X_j)>0$.)

Let $\pi:X\to X$ be a projection onto $X_1+...+X_n$ of norm at most $C$.
Set $V=\ker(\pi)$, then $V$ is a complement of $X_1+...+X_n$ in $X$.
Then the subspaces $X_1,...,X_n,V$ are linearly independent and $X_1+...+X_n+V=X$.
Let $i\in\{1,...,n\}$.
By Lemma~\ref{l:closed} the subspace $\sum_{j\neq i}X_j+V$ is closed.
Thus the subspaces $X_i$ and $\sum_{j\neq i}X_j+V$ are complementary to each other in $X$.
Suppose $X_i'$, $i=1,...,n$ are closed nonzero subspaces of $X$ such that

(A1) $\Theta_0(X_i',X_i)<\delta(\sum_{j\neq i}X_j+V,X_i)$ and

(A2) $\Theta_0(X_i,X_i')<\delta(X_i,\sum_{j\neq i}X_j+V)$

for $i=1,...,n$.

\begin{remark}
In what follows we will assume that we have estimates $\Theta_0(X_i,X_i')\leqslant\theta_i$ and $\Theta_0(X_i',X_i)\leqslant\theta_i'$,
$i=1,...,n$, where $\theta_1,...,\theta_n$ and $\theta_1',...,\theta_n'$ are nonnegative numbers.
Then (A2) will be satisfied if $\theta_i<\delta_i/C$ and (A1) will be satisfied if $\theta_i'<\delta_i/(C+\delta_i)$.
This follows from the inequalities
$\delta(X_i,\sum_{j\neq i}X_j+V)\geqslant\delta_i/C$ and $\delta(\sum_{j\neq i}X_j+V,X_i)\geqslant\delta_i/(C+\delta_i)$.
Let us prove them.
The closed subspaces $X_i$ and $\sum_{j\neq i}X_j$ are complementary to each other in $X_1+...+X_n$.
Denote by $\pi_i$ the projection onto $X_i$ along $\sum_{j\neq i}X_j$.
Then $\|\pi_i\|\leqslant 1/\delta_i$.
Define the operator $P_i=\pi_i \pi:X\to X$.
It is clear that $P_i$ is the projection onto $X_i$ along $\sum_{j\neq i}X_j+V$.
We have $\|P_i\|\leqslant\|\pi_i\|\|\pi\|\leqslant C/\delta_i$.
Therefore
$$
\delta(X_i,\sum_{j\neq i}X_j+V)=\frac{1}{\|P_i\|}\geqslant\frac{\delta_i}{C}.
$$
To estimate $\delta(\sum_{j\neq i}X_j+V,X_i)$ note that
$$
\delta(\sum_{j\neq i}X_j+V,X_i)=\frac{1}{\|I-P_i\|}\geqslant\frac{1}{\|P_i\|+1}\geqslant\frac{1}{C/\delta_i+1}=\frac{\delta_i}{C+\delta_i}.
$$
\end{remark}

From (A1), (A2) and Lemma~\ref{l:complementary} it follows that $X_i'$ and $\sum_{j\neq i}X_j+V$ are complementary to each other in $X$.
Denote by $P_i'$ the projection onto $X_i'$ along $\sum_{j\neq i}X_j+V$.
Now we are going to use Theorem~\ref{Th:Banach} for the subspaces $X_1',...,X_n'$ and the projections $P_1',...,P_n'$.
To do this we have to estimate $\|P_i'|_{X_j'}\|$, $i\neq j$.
Let $x\in X_j'$.
Since $P_i'|_{X_j}=0$, for arbitrary $y\in X_j$ we have $\|P_i'x\|=\|P_i'(x-y)\|\leqslant \|P_i'\|\|x-y\|$.
It follows that
$$
\|P_i'x\|\leqslant\|P_i'\|\text{dist}(x,X_j)\leqslant\|P_i'\|\Theta_0(X_j',X_j)\|x\|.
$$
Now suppose that we have estimates $\|P_i'\|\leqslant a_i$, $i=1,...,n$.
Recall that $\Theta_0(X_j',X_j)\leqslant\theta_j'$, $j=1,...,n$.
Then $\|P_i'x\|\leqslant a_i\theta_j'\|x\|$, $x\in X_j'$.
Hence, we can set $\varepsilon_{ij}=a_i\theta_j'$, $i\neq j$, and define the $n\times n$ matrix $E=(e_{ij})$ by
$$e_{ij}=
\begin{cases}
0, &\text{if $i=j$;}\\
\varepsilon_{ij}, &\text{if $i\neq j$.}
\end{cases}
$$
If $r(E)<1$, then by Theorem~\ref{Th:Banach}
the subspaces $X_1',...,X_n'$ are linearly independent, their sum is complemented in $X$ and, moreover, the subspace
$$
\bigcap_{i=1}^n \ker(P_i')=\bigcap_{i=1}^n \left(\sum_{j\neq i}X_j+V\right)=V
$$
is a complement of $X_1'+...+X_n'$ in $X$.
By Lemma~\ref{l:spectral radius} $r(E)<1$ if and only if
\begin{equation}\label{eq:suff cond<1}
\sum_{i=1}^n\frac{a_i\theta_i'}{a_i\theta_i'+1}<1.
\end{equation}
Let us show how inequality~\eqref{eq:suff cond<1} looks for the estimates $\|P_i'\|\leqslant a_i$ given by Lemma~\ref{l:estimates}.

\begin{example}\label{ex:example1}
We will use the estimate for $\|P_i'\|$ given by Lemma~\ref{l:estimates}(1).
To use the estimate we need to assume that $\Theta_0(X_i,X_i')<\delta(X_i,\sum_{j\neq i}X_j+V)$.
Recall that $\Theta_0(X_i,X_i')\leqslant\theta_i$ and $\delta(X_i,\sum_{j\neq i}X_j+V)\geqslant\delta_i/C$.
Thus, if $\theta_i<\delta_i/C$ then the assumption is satisfied and we have
$$
\|P_i'\|\leqslant\frac{1+\Theta_0(X_i,X_i')}{\delta(X_i,\sum_{j\neq i}X_j+V)-\Theta_0(X_i,X_i')}\leqslant
\frac{1+\theta_i}{\delta_i/C-\theta_i}=\frac{C(1+\theta_i)}{\delta_i-C\theta_i}.
$$
So $a_i=C(1+\theta_i)/(\delta_i-C\theta_i)$, $i=1,...,n$.
For these $a_i$ inequality~\eqref{eq:suff cond<1}, after simple transformations, takes the form
\begin{equation}\label{eq:suff cond1}
\sum_{i=1}^n\frac{(1+\theta_i)\theta_i'}{\delta_i+C((1+\theta_i)\theta_i'-\theta_i)}<\frac{1}{C}.
\end{equation}
Note that if $\theta_i=\theta_i'$, $i=1,...,n$, then this inequality takes the form
$$
\sum_{i=1}^n\frac{\theta_i+\theta_i^2}{\delta_i+C\theta_i^2}<\frac{1}{C}.
$$
\end{example}

\begin{example}\label{ex:example2}
We will use the estimate for $\|P_i'\|$ given by Lemma~\ref{l:estimates}(2).
To use the estimate we need to assume that $\Lambda_0(X_i,X_i')<\delta(X_i,\sum_{j\neq i}X_j+V)$.
Suppose we have estimates $\Lambda_0(X_i,X_i')\leqslant\lambda_i$, $i=1,...,n$, where $\lambda_1,...,\lambda_n$ are nonnegative numbers.
If $\lambda_i<\delta_i/C$ then the assumption is satisfied and we have
$$
\|P_i'\|\leqslant\frac{1}{\delta(X_i,\sum_{j\neq i}X_j+V)-\Lambda_0(X_i,X_i')}\leqslant
\frac{1}{\delta_i/C-\lambda_i}=\frac{C}{\delta_i-C\lambda_i}.
$$
So $a_i=C/(\delta_i-C\lambda_i)$, $i=1,...,n$.
For these $a_i$ inequality~\eqref{eq:suff cond<1}, after simple transformations, takes the form
\begin{equation}\label{eq:suff cond2}
\sum_{i=1}^n\frac{\theta_i'}{\delta_i+C(\theta_i'-\lambda_i)}<\frac{1}{C}.
\end{equation}
\end{example}

\begin{example}\label{ex:example3}
We will use the estimate for $\|P_i'\|$ given by Lemma~\ref{l:estimates}(3).
To use the estimate we need to assume that $\Theta_0(X_i',X_i)<\delta(X_i,\sum_{j\neq i}X_j+V)/(\delta(X_i,\sum_{j\neq i}X_j+V)+1)$.
Recall that $\Theta_0(X_i',X_i)\leqslant\theta_i'$ and $\delta(X_i,\sum_{j\neq i}X_j+V)\geqslant\delta_i/C$.
Thus, if $\theta_i'<\delta_i/(C+\delta_i)$ then the assumption is satisfied and we have
\begin{align*}
&\|P_i'\|\leqslant\frac{1}{\delta(X_i,\sum_{j\neq i}X_j+V)-(\delta(X_i,\sum_{j\neq i}X_j+V)+1)\Theta_0(X_i',X_i)}\leqslant\\
&\leqslant\frac{1}{\delta_i/C-(\delta_i/C+1)\theta_i'}=\frac{C}{\delta_i-(C+\delta_i)\theta_i'}.
\end{align*}
So $a_i=C/(\delta_i-(C+\delta_i)\theta_i')$, $i=1,...,n$.
For these $a_i$ inequality~\eqref{eq:suff cond<1}, after simple transformations, takes the form
\begin{equation}\label{eq:suff cond3}
\sum_{i=1}^n\frac{\theta_i'}{\delta_i(1-\theta_i')}<\frac{1}{C}.
\end{equation}
\end{example}

\begin{example}\label{ex:example4}
We will use the estimate for $\|P_i'\|$ given by Lemma~\ref{l:estimates}(4).
To use the estimate we need to assume that $\Omega_0(X_i',X_i)<\delta(X_i,\sum_{j\neq i}X_j+V)$.
Suppose we have estimates $\Omega_0(X_i',X_i)\leqslant\omega_i'$, $i=1,...,n$, where $\omega_1',...,\omega_n'$ are nonnegative numbers.
If $\omega_i'<\delta_i/C$ then the assumption is satisfied and we have
$$
\|P_i'\|\leqslant\frac{1}{\delta(X_i,\sum_{j\neq i}X_j+V)-\Omega_0(X_i',X_i)}\leqslant
\frac{1}{\delta_i/C-\omega_i'}=\frac{C}{\delta_i-C\omega_i'}.
$$
So $a_i=C/(\delta_i-C\omega_i')$, $i=1,...,n$.
For these $a_i$ inequality~\eqref{eq:suff cond<1}, after simple transformations, takes the form
\begin{equation}\label{eq:suff cond4}
\sum_{i=1}^n\frac{\theta_i'}{\delta_i-C(\omega_i'-\theta_i')}<\frac{1}{C}.
\end{equation}
\end{example}

Lastly, we note that for estimation of $\|P_i'\|$ one can use different items of Lemma~\ref{l:estimates} for different $i$
(for some $i$ one can use the estimate given by Lemma~\ref{l:estimates}(1), for some $i$ --- given by Lemma~\ref{l:estimates}(2), etc.).
Then inequality~\eqref{eq:suff cond<1} will be mix of inequalities
\eqref{eq:suff cond1}, \eqref{eq:suff cond2},\eqref{eq:suff cond3} and \eqref{eq:suff cond4}
from examples \ref{ex:example1}, \ref{ex:example2}, \ref{ex:example3} and \ref{ex:example4}, respectively.

\subsection{Proof of Lemma~\ref{l:closed}}\label{ss:proof l closed}
Let $V_1\times...\times V_n$ be the linear space of all vector-columns $(v_1,...,v_n)^t$ with $v_1\in V_1,...,v_n\in V_n$
endowed with the norm $\|(v_1,...,v_n)^t\|=\|v_1\|+...+\|v_n\|$.
Then, obviously, $V_1\times...\times V_n$ is a Banach space.
Define the sum operator $S:V_1\times...\times V_n\to V_1+...+V_n$ by
$$S(v_1,...,v_n)^t=v_1+...+v_n,\quad v_1\in V_1,...,v_n\in V_n.$$
Then $S$ is a continuous linear operator with $\ker(S)=\{0\}$ and $Ran(S)=V_1+...+V_n$.
By the Banach inverse mapping theorem $S$ is an isomorphism.
It follows that $V_1+...+V_{n-1}=S(V_1\times...\times V_{n-1}\times\{0\})$ is closed in $V_1+...+V_n$.
Thus $V_1+...+V_{n-1}$ is closed in $X$.

\subsection{Proof of Lemma~\ref{l:complementary}}\label{ss:proof l complementary}
First we will prove that $Y'\cap Z=\{0\}$ and $Y'+Z$ is closed.
To this end we will show that $\delta(Y',Z)>0$.
Let $y'\in S_{Y'}$ and $z\in Z$.
For arbitrary $y\in Y$ we have
\begin{equation*}
\|y'-z\|=\|(y-z)+(y'-y)\|\geqslant \|y-z\|-\|y'-y\|=\|z-y\|-\|y'-y\|\geqslant\delta(Z,Y)\|z\|-\|y'-y\|.
\end{equation*}
Since $y\in Y$ is arbitrary, we conclude that
\begin{equation}\label{eq:ineq1}
\|y'-z\|\geqslant\delta(Z,Y)\|z\|-\text{dist}(y',Y)\geqslant\delta(Z,Y)\|z\|-\Theta_0(Y',Y).
\end{equation}
We also have $\|y'-z\|\geqslant \|y'\|-\|z\|=1-\|z\|$.
Multiplying this inequality by $\delta(Z,Y)$ and adding to~\eqref{eq:ineq1}, we get
$(1+\delta(Z,Y))\|y'-z\|\geqslant\delta(Z,Y)-\Theta_0(Y',Y)$.
Thus $\|y'-z\|\geqslant (\delta(Z,Y)-\Theta_0(Y',Y))/(1+\delta(Z,Y))$.
It follows that $\delta(Y',Z)\geqslant(\delta(Z,Y)-\Theta_0(Y',Y))/(1+\delta(Z,Y))>0$.

Let us show that $Y'+Z=X$.
To this end we will show that $Y\subset Y'+Z$.

For simplicity of notation, set $\Theta_0=\Theta_0(Y,Y')$.
Denote by $P$ the projection onto $Y$ along $Z$, by $Q$ the projection onto $Z$ along $Y$.
Recall that $\|P\|=1/\delta(Y,Z)$.
Therefore $\Theta_0\|P\|=\Theta_0/\delta(Y,Z)<1$.
Choose arbitrary number $\eta\in(1,1/(\Theta_0\|P\|))$.
Note that for every $y\in Y$ $\text{dist}(y,Y')\leqslant\Theta_0\|y\|$.
Hence there exists $y'\in Y'$ such that $\|y-y'\|\leqslant\eta\Theta_0\|y\|$.

Now we are ready to prove that $Y\subset Y'+Z$.
Consider arbitrary $y_0\in Y$.
We will choose inductively two sequences $\{y_N\mid N\geqslant 1\}\subset Y$ and $\{y_N'\mid N\geqslant 0\}\subset Y'$ as follows.

\textbf{First step.}
There exists $y_0'\in Y'$ such that $\|y_0-y_0'\|\leqslant\eta\Theta_0\|y_0\|$.
We write
$$
y_0=y_0'+(y_0-y_0')=y_0'+Q(y_0-y_0')+P(y_0-y_0')
$$
and define $y_1=P(y_0-y_0')$.

\textbf{Second step.}
There exists $y_1'\in Y'$ such that $\|y_1-y_1'\|\leqslant\eta\Theta_0\|y_1\|$.
We write
$$
y_1=y_1'+(y_1-y_1')=y_1'+Q(y_1-y_1')+P(y_1-y_1')
$$
and define $y_2=P(y_1-y_1')$ etc.

\textbf{N-th step.}
There exists $y_{N-1}'\in Y'$ such that $\|y_{N-1}-y_{N-1}'\|\leqslant\eta\Theta_0\|y_{N-1}\|$.
We write
\begin{equation}\label{eq:y N-1}
y_{N-1}=y_{N-1}'+(y_{N-1}-y_{N-1}')=y_{N-1}'+Q(y_{N-1}-y_{N-1}')+P(y_{N-1}-y_{N-1}')
\end{equation}
and define $y_N=P(y_{N-1}-y_{N-1}')$ etc.

Thus we get two sequences $\{y_N\mid N\geqslant 1\}\subset Y$ and $\{y_N'\mid N\geqslant 0\}\subset Y'$.
By the definition of $y_N$ we have
$$
\|y_N\|\leqslant\|P\|\|y_{N-1}-y_{N-1}'\|\leqslant\eta\Theta_0\|P\|\|y_{N-1}\|.
$$
It follows that
$$
\|y_N\|\leqslant(\eta\Theta_0\|P\|)^N\|y_0\|,\quad N\geqslant 0.
$$
Since $\eta\Theta_0\|P\|<1$, we see that $y_N\to 0$ as $N\to\infty$.
From~\eqref{eq:y N-1} it follows that $y_0=\sum_{k=0}^{N-1}(y_k'+Q(y_k-y_k'))+y_N$ and, consequently,
\begin{equation}\label{eq:y 0}
y_0=\sum_{k=0}^\infty(y_k'+Q(y_k-y_k')).
\end{equation}
Thus the element $y_0$ belongs to the closure of $Y'+Z$.
Recall that we have already proved that $Y'+Z$ is closed.
Hence $y_0\in Y'+Z$.
It follows that $Y\subset Y'+Z$ and therefore $Y'+Z=X$.

\subsection{Proof of Lemma~\ref{l:estimates}}\label{ss:proof l estimates}

\begin{proof}[Proof of Lemma~\ref{l:estimates}(1)]
We will use the proof of Lemma~\ref{l:complementary}.
By \eqref{eq:y 0} and continuity of $P'$ we have $P'y_0=\sum_{k=0}^\infty y_k'$.
Since $\|y_k-y_k'\|\leqslant\eta\Theta_0\|y_k\|$, we see that
$$
\|y_k'\|\leqslant(1+\eta\Theta_0)\|y_k\|\leqslant(1+\eta\Theta_0)(\eta\Theta_0\|P\|)^k\|y_0\|,\quad k\geqslant 0.
$$
It follows that
$$
\|P'y_0\|\leqslant\sum_{k=0}^\infty\|y_k'\|\leqslant\sum_{k=0}^\infty(1+\eta\Theta_0)(\eta\Theta_0\|P\|)^k\|y_0\|=
\frac{1+\eta\Theta_0}{1-\eta\Theta_0\|P\|}\|y_0\|.
$$
Letting $\eta\to 1+$ we get
$$
\|P'y_0\|\leqslant\frac{1+\Theta_0}{1-\Theta_0\|P\|}\|y_0\|.
$$
This is true for every $y_0\in Y$.
For arbitrary $x\in X$ we have
$$
\|P'x\|=\|P'(Px+Qx)\|=\|P'Px\|\leqslant\frac{1+\Theta_0}{1-\Theta_0\|P\|}\|Px\|\leqslant\frac{(1+\Theta_0)\|P\|}{1-\Theta_0\|P\|}\|x\|.
$$
Thus
$$
\|P'\|\leqslant\frac{(1+\Theta_0)\|P\|}{1-\Theta_0\|P\|}=\frac{1+\Theta_0}{1/\|P\|-\Theta_0}=\frac{1+\Theta_0}{\delta(Y,Z)-\Theta_0}.
$$
\end{proof}

\begin{proof}[Proof of Lemma~\ref{l:estimates}(2)]
The proof is similar to the proof of Lemma~\ref{l:estimates} (1).
For simplicity of notation, set $\Lambda_0=\Lambda_0(Y,Y')$.
We have $\Lambda_0\|P\|=\Lambda_0/\delta(Y,Z)<1$.
Choose arbitrary number $\eta\in(1,1/(\Lambda_0\|P\|))$.
For $r\geqslant 0$ let $B_{Y'}(r)=\{y'\in Y'\mid \|y'\|\leqslant r\}$.
Note that for every $y\in Y$ $\text{dist}(y,B_{Y'}(\|y\|))\leqslant\Lambda_0\|y\|$.
Hence there exists $y'\in Y'$ with $\|y'\|\leqslant\|y\|$ such that $\|y-y'\|\leqslant\eta\Lambda_0\|y\|$.

Consider arbitrary $y_0\in Y$.
We will choose inductively two sequences $\{y_N\mid N\geqslant 1\}\subset Y$ and $\{y_N'\mid N\geqslant 0\}\subset Y'$
in the same way as in the proof of Lemma~\ref{l:complementary} but with the only difference:
at the $N$-th step we choose $y_{N-1}'\in Y'$ with $\|y_{N-1}'\|\leqslant\|y_{N-1}\|$ such that
$\|y_{N-1}-y_{N-1}'\|\leqslant\eta\Lambda_0\|y_{N-1}\|$.
Arguing as in the proof of Lemma~\ref{l:complementary}, one can show that
$\|y_N\|\leqslant(\eta\Lambda_0\|P\|)^N\|y_0\|$ for $N\geqslant 0$
and $y_0=\sum_{k=0}^\infty(y_k'+Q(y_k-y_k'))$.
By continuity of $P'$ we have $P'y_0=\sum_{k=0}^\infty y_k'$.
Thus
$$
\|P'y_0\|\leqslant\sum_{k=0}^\infty\|y_k'\|\leqslant\sum_{k=0}^\infty\|y_k\|\leqslant
\sum_{k=0}^\infty(\eta\Lambda_0\|P\|)^k\|y_0\|=\frac{1}{1-\eta\Lambda_0\|P\|}\|y_0\|.
$$
Letting $\eta\to 1+$ we get
$$
\|P'y_0\|\leqslant\frac{1}{1-\Lambda_0\|P\|}\|y_0\|.
$$
This is valid for arbitrary $y_0\in Y$.
Similarly to the end of the proof of Lemma~\ref{l:estimates}(1) we get
$$
\|P'\|\leqslant\frac{\|P\|}{1-\Lambda_0\|P\|}=\frac{1}{1/\|P\|-\Lambda_0}=\frac{1}{\delta(Y,Z)-\Lambda_0}.
$$
\end{proof}

\begin{proof}[Proof of Lemma~\ref{l:estimates}(3)]
Recall that $\delta(Y',Z)=1/\|P'\|$.
Therefore $\|P'\|=1/\delta(Y',Z)$.
We will estimate $\delta(Y',Z)$ from below.
Let $y'\in S_{Y'}$ and $z\in Z$.
For arbitrary $y\in Y$ we have
\begin{align*}
&\|y'-z\|=\|(y-z)+(y'-y)\|\geqslant\|y-z\|-\|y'-y\|\geqslant\delta(Y,Z)\|y\|-\|y'-y\|\geqslant\\
&\geqslant\delta(Y,Z)(\|y'\|-\|y'-y\|)-\|y'-y\|=\delta(Y,Z)-(\delta(Y,Z)+1)\|y'-y\|.
\end{align*}
It follows that
$$
\|y'-z\|\geqslant\delta(Y,Z)-(\delta(Y,Z)+1)\text{dist}(y',Y)\geqslant\delta(Y,Z)-(\delta(Y,Z)+1)\Theta_0(Y',Y).
$$
Thus $\delta(Y',Z)\geqslant\delta(Y,Z)-(\delta(Y,Z)+1)\Theta_0(Y',Y)$ and consequently
$$
\|P'\|=\frac{1}{\delta(Y',Z)}\leqslant\frac{1}{\delta(Y,Z)-(\delta(Y,Z)+1)\Theta_0(Y',Y)}.
$$
\end{proof}

\begin{proof}[Proof of Lemma~\ref{l:estimates}(4)]
We will estimate $\delta(Y',Z)$ from below.
Let $y'\in S_{Y'}$ and $z\in Z$.
For arbitrary $y\in Y$ we have
$$
\|y'-z\|=\|(y-z)+(y'-y)\|\geqslant\|y-z\|-\|y'-y\|\geqslant\delta(Y,Z)\|y\|-\|y'-y\|.
$$
If $y\in S_Y$, then we get $\|y'-z\|\geqslant\delta(Y,Z)-\|y'-y\|$.
It follows that
$$
\|y'-z\|\geqslant\delta(Y,Z)-\text{dist}(y',S_Y)\geqslant\delta(Y,Z)-\Omega_0(Y',Y).
$$
Thus $\delta(Y',Z)\geqslant\delta(Y,Z)-\Omega_0(Y',Y)$ and consequently
$$
\|P'\|=\frac{1}{\delta(Y',Z)}\leqslant\frac{1}{\delta(Y,Z)-\Omega_0(Y',Y)}.
$$
\end{proof}

\subsection{Proof of Lemma~\ref{l:spectral radius}}\label{ss:proof l spectral radius}
If $b_1=...=b_n=0$, then $E=0$, $r(E)=0$ and condition~\eqref{eq:sum<1} is satisfied.
If exactly one of the numbers $b_1,...,b_n$ is greater than $0$, then $E^2=0$, $r(E)=0$ and condition~\eqref{eq:sum<1} is satisfied.
Assume that at least two of the numbers $b_1,...,b_n$ are greater than $0$.
Since the matrix $E$ is nonnegative, we conclude that $r(E)$ is an eigenvalue of $E$.
Let us consider the equation $Ew=\alpha w$, where $\alpha>0$ and $w$ is a nonzero vector.
This equation is equivalent to $\sum_{j\neq i}a_i b_j w_j=\alpha w_i$, $i=1,...,n$.
We rewrite these equations as $\sum_{j\neq i}b_j w_j=(\alpha/a_i)w_i$, $\sum_{j=1}^n b_j w_j=(b_i+\alpha/a_i)w_i$, $i=1,...,n$.
Set $s=\sum_{j=1}^n b_j w_j$.
Then $(b_i+\alpha/a_i)w_i=s$, $w_i=a_i s/(a_i b_i+\alpha)$, $i=1,...,n$.
Substituting this into the equation defining $s$, we get
$$
\sum_{j=1}^n b_j\dfrac{a_j s}{a_j b_j+\alpha}=s.
$$
If $s=0$, then $w_i=0$, $i=1,...,n$ which is impossible.
Thus $s\neq 0$ and, consequently, we get the following equation for $\alpha$:
$$
\sum_{j=1}^n\frac{a_j b_j}{a_j b_j+\alpha}=1.
$$
Define the function $f:(0,+\infty)\to\mathbb{R}$ by $f(t)=\sum_{j=1}^n a_j b_j/(a_j b_j+t)$, $t>0$.
It is clear that $f$ is continuous and decreasing on $(0,+\infty)$.
Moreover, $\lim_{t\to+\infty}f(t)=0$ and $\lim_{t\to 0+}f(t)$ is equal to the number of $j$ for which $b_j>0$,
recall that this number is at least two.
It follows that the equation $f(t)=1$ has a unique solution.
From the arguments above it follows that $r(E)$ is the solution.
It remains to note that $r(E)<1$ if and only $f(1)<1$ which is equivalent to \eqref{eq:sum<1}.

\textbf{Acknowledgements.}
This research was supported by the Project 2017-3M from the Department of Targeted Training of 
Taras Shevchenko National University of Kyiv at the NAS of Ukraine.

\end{document}